\date{June 18, 2021}
\begin{document}

\centerline {\Large{\bf Rooted mutation groups and finite type   cluster algebras}}

\centerline{}

\centerline{}

\centerline{\bf {Ibrahim Saleh}}

\centerline{Email: salehi@uww.edu}
\centerline{University of Wisconsin Whitewater}
% THEOREM Environments ---------------------------------------------------

% THEOREM Environments ---------------------------------------------------
 \newtheorem{thm}{Theorem}[section]
 \newtheorem{cor}[thm]{Corollary}
  \newtheorem{cor and defn}[thm]{Corollary and Definition}
 \newtheorem{lem}[thm]{Lemma}
 \newtheorem{prop}[thm]{Proposition}
 \theoremstyle{definition}
 \newtheorem{defn}[thm]{Definition}
 \newtheorem{defns}[thm]{Definitions}
 \newtheorem{defns and nots}[thm]{Definitions and Notations}
 \theoremstyle{remark}
 \newtheorem{rem}[thm]{Remark}
 \newtheorem{rems}[thm]{Remarks}
 \newtheorem{exam}[thm]{Example}
 \newtheorem{exams}[thm]{Examples}
 \newtheorem{conj}[thm]{Conjecture}
 \newtheorem{que}[thm]{Question}
  \newtheorem{ques}[thm]{Questions}
  \newtheorem{ques and Conj}[thm]{Questions and Conjecture}
 \newtheorem{rem and def}[thm]{Remark and Definition}
 \newtheorem{def and rem}[thm]{Definition and Remark}
 \newtheorem{corr}[thm]{Corollary of the Proof of Proposition 5.15}
 \numberwithin{equation}{section}
 % newtheorems of Renaud
\newtheorem{IbrI}{Lemma}[section]
\newtheorem{chiral}[IbrI]{Definition}
\newtheorem{IbrII}[IbrI]{Lemma}
\newcommand{\field}[1]{\mathbb{#1}}
 \newtheorem{mapProof}[thm]{Map of the Proof}
 \newcommand{\eps}{\varepsilon}
 \newcommand{\To}{\longrightarrow}
 \newcommand{\h}{\mathcal{H}}
 \newcommand{\s}{\mathcal{S}}
 \newcommand{\A}{\mathcal{A}}
 \newcommand{\J}{\mathcal{J}}
 \newcommand{\M}{\mathcal{M}}
 \newcommand{\W}{\mathcal{W}}
 \newcommand{\X}{\mathcal{X}}
 \newcommand{\BOP}{\mathbf{B}}
 \newcommand{\BH}{\mathbf{B}(\mathcal{H})}
 \newcommand{\KH}{\mathcal{K}(\mathcal{H})}
 \newcommand{\Real}{\mathbb{R}}
 \newcommand{\Complex}{\mathbb{C}}
 \newcommand{\Field}{\mathbb{F}}
 \newcommand{\RPlus}{\Real^{+}}
 \newcommand{\Polar}{\mathcal{P}_{\s}}
 \newcommand{\Poly}{\mathcal{P}(E)}
 \newcommand{\EssD}{\mathcal{D}}
 \newcommand{\Lom}{\mathcal{L}}
 \newcommand{\States}{\mathcal{T}}
 \newcommand{\abs}[1]{\left\vert#1\right\vert}
 \newcommand{\set}[1]{\left\{#1\right\}}
 \newcommand{\seq}[1]{\left<#1\right>}
 \newcommand{\norm}[1]{\left\Vert#1\right\Vert}
 \newcommand{\essnorm}[1]{\norm{#1}_{\ess}}
 %Renaud's new commands below
 \newcommand{\beq}{\begin{equation}}
\newcommand{\eeq}{\end{equation}}
\newcommand{\rarr}{\rightarrow}
\newcommand{\cA}{\mathcal{A}}
\newcommand{\cS}{\mathcal{S}}
\newcommand{\cC}{\mathcal{C}}
\newcommand{\cU}{\mathcal{U}}
\newcommand{\cR}{\mathcal{R}}
\newcommand{\RMod}{R\text{-Mod}}
\newcommand{\AMod}{A\text{-Mod}}
\newcommand{\Rep}{\text{Rep}}
\newcommand{\Aut}{\text{Aut}}
\newcommand{\XAut}{\xi\Aut}
\newcommand{\Rtzn}{R \{\theta, z, n \}}
\newcommand{\TxC}{(\Theta, \xi, \cC)}
\newcommand{\Chir}{\text{Chir}}
%%% ----------------------------------------------------------------------

\tableofcontents \begin{abstract} For a fixed seed $(X, Q)$, a \emph{rooted mutation loop} is a sequence of mutations that preserves $(X, Q)$. The group generated by all rooted mutation loops is called \emph{rooted mutation group} and will be denoted by $\mathcal{M}(Q)$. The \emph{global mutation group} of $(X, Q)$, denoted $\mathcal{M}$, is the group of all mutation sequences subject to the relations on the cluster structure of $(X, Q)$. In this article, we show that two finite type cluster algebras  $\mathcal{A}(Q)$ and  $\mathcal{A}(Q')$ are isomorphic if and only if their rooted mutation groups are isomorphic and the sets $\mathcal{M}/\mathcal{M}(Q)$ and  $\mathcal{M'}/\mathcal{M}(Q')$ are in one to one correspondence. The second main result shows that the group $\mathcal{M}(Q)$ and the set $\mathcal{M}/\mathcal{M}(Q)$ determine the finiteness of the cluster algebra $\mathcal{A}(Q)$ and vice versa.
\end{abstract}

{\bf Mathematics Subject Classification (2010): } Primary 13F60, Secondary  05E15. \\

{\bf Keywords: } Cluster Algebras,  Subseeds, Rooted mutation loops.

\section{Introduction}

S. Fomin and A. Zelevinsky introduced cluster algebras in [17, 8, 9, 10, 2]. A cluster algebra is a  ring with distinguished sets of generators called ``\emph{clusters}". A cluster is a set of commutative variables that form a transcendental basis for some rational field $\mathcal{F}$. Each cluster is paired with a ``valued" quiver (skew-symmetrizable matrix) to form what is called a \emph{seed}. The \emph{cluster structure} of a cluster algebra is the set of all seeds generated from an initial seed by applying \emph{mutations}. A cluster algebra is of finite type if its cluster structure is a finite set and equivalently the number of clusters is finite.  

For a fixed seed $(X, Q)$, a \emph{rooted mutation loop} is a sequence of mutations that preserves $(X, Q)$. The group generated by all rooted mutation loops is called \emph{rooted mutation group} and will be denoted by $\mathcal{M}(Q)$.  In [16], the concept of global mutation loops was introduced as sequences of mutations preserving every seed in the cluster structure. The group generated by all sequences of mutations subject to the global mutation loops, as relations, is termed the \emph{global mutation group} and denoted as $\mathcal{M}$. 
  
In this article, we used a ``natural" orientation for the cluster pattern  to  introduce a concept termed \emph{reduction process} applied to the directed cluster pattern (refer to Definition 3.7). This procedure forms the basis for the statement of the main result, detailed in Theorem 3.15.  
In [14], an equivalent condition for the isomorphism between two cluster algebras, $\mathcal{A}(Q)$ and $\mathcal{A}(Q')$, was established. This condition implies that the quivers $Q$ and $Q'$ are symmetric, meaning there exists a quiver automorphism $\sigma$ such that $Q' = \sigma(Q)$. We provide another equivalent condition for two finite type cluster algebras to be isomorphic, as detailed in Theorem 3.15. The following presents a statement of this result.

\begin{thm} Two cluster  algebras $\mathcal{A}(X, Q)$ and  $\mathcal{A}(X', Q')$, of finite type, are isomorphic as cluster algebras if and only if the following conditions are satisfied 
\begin{itemize}
  \item the associated rooted mutations groups $\mathcal{M}(Q)$ and $\mathcal{M}(Q')$ are isomorphic;
  \item the two sets $\mathcal{M}/\mathcal{M}(Q)$ and $\mathcal{M'}/\mathcal{M}(Q')$ are in one-to-one correspondence. 
\end{itemize}
\end{thm}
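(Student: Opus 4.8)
The plan is to view the whole picture through the orbit--stabilizer lens. The global mutation group $\mathcal{M}$ acts on the set $\mathcal{S}$ of seeds of the cluster structure, and this action is transitive because every seed is reached from the root $(X,Q)$ by a mutation sequence. The rooted mutation group $\mathcal{M}(Q)$ is exactly the stabilizer of the root seed: a mutation sequence fixes $(X,Q)$ precisely when it is a rooted mutation loop (or a product of such), so $\mathcal{M}(Q)=\mathrm{Stab}_{\mathcal{M}}(X,Q)$. Consequently there is a canonical bijection of $\mathcal{M}$-sets $\mathcal{M}/\mathcal{M}(Q)\xrightarrow{\ \sim\ }\mathcal{S}$, sending a coset to the seed it reaches; in particular $|\mathcal{M}/\mathcal{M}(Q)|$ equals the number of seeds, which is finite precisely in the finite type case. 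This identification is the backbone for both directions.

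For the forward direction, suppose $\Phi\colon\mathcal{A}(X,Q)\to\mathcal{A}(X',Q')$ is an isomorphism of cluster algebras. Then $\Phi$ carries clusters to clusters and intertwines mutations, so it induces an isomorphism of the directed cluster patterns and hence a group isomorphism $\mathcal{M}\xrightarrow{\sim}\mathcal{M}'$ compatible with the two transitive actions. Under this isomorphism the stabilizer of $(X,Q)$ is carried to the stabilizer of $\Phi(X,Q)$; since the action of $\mathcal{M}'$ on $\mathcal{S}'$ is transitive, all point stabilizers are conjugate, so $\mathcal{M}(Q)\cong\mathrm{Stab}_{\mathcal{M}'}(\Phi(X,Q))\cong\mathcal{M}(Q')$. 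The induced bijection $\mathcal{S}\to\mathcal{S}'$ then transports to a one-to-one correspondence $\mathcal{M}/\mathcal{M}(Q)\leftrightarrow\mathcal{M}'/\mathcal{M}(Q')$, which establishes both itemized conditions.

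For the converse, the idea is to reconstruct the directed cluster pattern from the pair $\bigl(\mathcal{M}(Q),\ \mathcal{M}/\mathcal{M}(Q)\bigr)$ by means of the reduction process of Definition 3.7. Using the natural orientation, the reduction process organizes mutation sequences into reduced (normal-form) representatives: the reduced sequences are in bijection with the cosets in $\mathcal{M}/\mathcal{M}(Q)$, i.e.\ with the seeds, while the rooted mutation loops that generate $\mathcal{M}(Q)$ record exactly the identifications (cycles) among these sequences. Thus the oriented, quiver-labelled exchange graph is recovered from these two pieces of data. Given $\mathcal{M}(Q)\cong\mathcal{M}(Q')$ together with a bijection $\mathcal{M}/\mathcal{M}(Q)\leftrightarrow\mathcal{M}'/\mathcal{M}(Q')$, applying the reduction process to both sides yields isomorphic reduced directed cluster patterns. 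In finite type this forces $Q$ and $Q'$ to lie in the same class, and by the isomorphism criterion of [14] there is a quiver automorphism $\sigma$ with $Q'=\sigma(Q)$; relabelling the initial variables accordingly produces the desired cluster algebra isomorphism $\mathcal{A}(X,Q)\cong\mathcal{A}(X',Q')$.

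The main obstacle is precisely this backward reconstruction. A set bijection of coset spaces together with an abstract isomorphism of stabilizers does not, by itself, recover the group $\mathcal{M}$ nor its action on $\mathcal{S}$, and it is this action---the adjacency of seeds under single mutations and the quivers sitting at each seed---that encodes the cluster algebra. The work therefore lies in showing that, for finite type, the natural orientation and the reduction process rigidify the situation: the loop relations generating $\mathcal{M}(Q)$, read off along the reduction tree rooted at $(X,Q)$, determine the single-mutation adjacencies among the reduced representatives, and hence the full decorated exchange graph up to isomorphism. Verifying that this reconstruction is well defined and that it pins down the initial quiver up to automorphism---so that the finitely many Dynkin types are separated by the invariant $\bigl(\mathcal{M}(Q),\,|\mathcal{M}/\mathcal{M}(Q)|\bigr)$---is the crux of the argument and the step I expect to require the most care.
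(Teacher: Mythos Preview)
Your forward direction is fine and essentially parallel to the paper's, just phrased in the orbit--stabilizer language; the paper instead invokes [14, Theorem~3.14] to get a permutation $\sigma$ with $(X',Q')=\pm\sigma(X,Q)$ and then pushes everything through the explicit map $\mu_i\mapsto\mu_{\sigma(i)}$, but the content is the same.

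The backward direction, however, has a genuine gap, and the paper does not fill it the way you are hoping. First, note that the statement the paper actually proves (Theorem~3.15) carries a stronger hypothesis than the introductory version you are working from: it assumes the existence of a group isomorphism $\psi\colon\mathcal{M}\to\mathcal{M}'$ of the \emph{global} mutation groups whose restriction gives the rooted-group isomorphism and whose induced map gives the coset bijection. This is exactly the missing ingredient you flag in your last paragraph---an abstract isomorphism of stabilizers plus a set bijection of coset spaces does not by itself produce $\psi$, and the paper simply assumes $\psi$ rather than manufacturing it from the reduction process.

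Second, once $\psi$ is in hand, the paper's argument is nothing like the ``reconstruct the decorated exchange graph from the reduction tree'' strategy you sketch. Instead it is a hands-on case analysis on the weight $w([Q])\in\{1,2,3,4\}$. In the simply-laced case it shows directly that $\psi$ must send each single mutation $\mu_i$ to a single mutation (or a product of mutually non-adjacent ones), then that $\psi$ carries $\mathrm{Nhb}_i$ to $\mathrm{Nhb}_{\sigma(i)}$ and preserves edge orientations, forcing $Q'=\sigma(Q)$. For $w([Q])=2,3,4$ it leans on the explicit finite-mutation-type classification (Lemma~3.13) and compares which relations such as $\mu_{[ij]}=\mu_{[ji]}$ can or cannot live in $\mathcal{M}(Q')$, again concluding $Q'\in[Q]$ up to symmetry. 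None of this is a general reconstruction theorem; it is a finite check that the invariants separate the finitely many shapes allowed by Lemma~3.13. Your proposal would need either to supply the global isomorphism $\psi$ from the weaker data (which you do not do and which looks hard), or to replace your reconstruction sketch by this weight-by-weight case analysis.
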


In [9], Theorem 1.4 provides a complete classification of the cluster algebras of finite type. This classification is identical to the Cartan-Killing classification of semisimple Lie algebras and finite root systems. The other main result of this article, namely Theorem 3.12, provides an equivalent condition for a cluster algebra to be finite type. Here is a brief version of the theorem.  

\begin{thm}
The following are equivalent
\begin{enumerate}
  \item The cluster algebra $A(Q)$ is of finite type;
   \item $\mathcal{M}(Q)$ is a finite group and $\mathcal{M}/\mathcal{M}(Q)$ is a finite set.
\end{enumerate}

\end{thm}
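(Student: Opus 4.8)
The plan is to realize both conditions through a single group action: the global mutation group $\mathcal{M}$ acts on the cluster structure $\mathcal{S}$ of $(X,Q)$ (the set of all seeds), and to read off the finiteness statements via the orbit--stabilizer correspondence. First I would record the two structural facts that drive everything. Since every seed is obtained from the initial seed $(X,Q)$ by some mutation sequence, the action of $\mathcal{M}$ on $\mathcal{S}$ is transitive with $\mathcal{S}=\mathcal{M}\cdot(X,Q)$; and since a mutation sequence fixes $(X,Q)$ precisely when it is a rooted mutation loop, the stabilizer of the initial seed is exactly $\mathcal{M}(Q)$. The orbit--stabilizer theorem then yields a bijection of sets
\begin{equation}
\mathcal{M}/\mathcal{M}(Q)\ \xrightarrow{\ \sim\ }\ \mathcal{S},\qquad g\,\mathcal{M}(Q)\ \mapsto\ g\cdot(X,Q),
\end{equation}
so that $\lvert\mathcal{S}\rvert=[\mathcal{M}:\mathcal{M}(Q)]$. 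This identity is the backbone of the whole argument.

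For the implication $(1)\Rightarrow(2)$, I would assume $\mathcal{A}(Q)$ is of finite type, so that $\mathcal{S}$ is finite. The displayed bijection immediately gives that $\mathcal{M}/\mathcal{M}(Q)$ is finite. To see that $\mathcal{M}(Q)$ is finite as well, I would invoke faithfulness of the action: because $\mathcal{M}$ is defined as mutation sequences modulo exactly the relations imposed by the cluster structure (the global mutation loops, which are precisely the sequences acting as the identity on every seed), the natural homomorphism $\mathcal{M}\hookrightarrow\operatorname{Sym}(\mathcal{S})$ is injective. With $\mathcal{S}$ finite this forces $\mathcal{M}$, hence its subgroup $\mathcal{M}(Q)$, to be finite. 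For $(2)\Rightarrow(1)$ the computation is even shorter: if $\mathcal{M}/\mathcal{M}(Q)$ is finite then $\lvert\mathcal{S}\rvert=[\mathcal{M}:\mathcal{M}(Q)]$ is finite, i.e. the cluster structure has finitely many seeds, which is exactly finite type. Here only finiteness of the coset space is used; the hypothesis that $\mathcal{M}(Q)$ is finite is subsumed, and together with finiteness of $\mathcal{M}/\mathcal{M}(Q)$ it is in fact equivalent to finiteness of $\mathcal{M}$ itself.

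The step I expect to carry the real weight is not the counting but the justification of the two structural facts, and in particular faithfulness of the $\mathcal{M}$-action. I would need to argue carefully that the normal subgroup of global relations used to define $\mathcal{M}$ coincides with the kernel of the action on $\mathcal{S}$: one inclusion is immediate (global loops act trivially), while the reverse requires that every mutation sequence fixing all seeds already lie among the imposed relations. I would also be careful to treat $\mathcal{M}(Q)$ as a point stabilizer rather than a normal subgroup, so that $\mathcal{M}/\mathcal{M}(Q)$ is interpreted throughout as a \emph{coset space} and the correspondence with $\mathcal{S}$ is a bijection of sets; this is consistent with, and indeed feeds directly into, the set-theoretic matching of $\mathcal{M}/\mathcal{M}(Q)$ with $\mathcal{M}'/\mathcal{M}(Q')$ appearing in Theorem 3.15. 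Once these foundations are in place, the equivalence $(1)\Leftrightarrow(2)$ follows formally from the single identity $\lvert\mathcal{S}\rvert=[\mathcal{M}:\mathcal{M}(Q)]$ together with the embedding into $\operatorname{Sym}(\mathcal{S})$.
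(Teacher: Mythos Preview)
Your route via orbit--stabilizer is genuinely different from the paper's and considerably more direct. The paper inserts an intermediate condition (every path in the reduced digraph $\widetilde{\mathbb{G}}(Q)$ has finite cluster order) and argues combinatorially through the reduction process of Definition~3.7: finiteness of $[(X,Q)]$ bounds cluster sets, the reduction then truncates paths to finite length, and this bounds both the number of non-identical rooted loops and the residual sequences in $\mathcal{M}/\mathcal{M}(Q)$. You bypass all of this with the single structural observation that $\mathcal{M}$ acts faithfully (immediate from Definition~3.1) and transitively on the seed set $\mathcal{S}$, so $|\mathcal{S}|=[\mathcal{M}:\mathrm{Stab}(X,Q)]$ and $\mathcal{M}\hookrightarrow\mathrm{Sym}(\mathcal{S})$. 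What you gain is a transparent conceptual explanation; what the paper's route keeps visible is the reduced-digraph machinery it reuses in Theorem~3.15.

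The one point you must not skate over is the identification $\mathcal{M}(Q)=\mathrm{Stab}_{\mathcal{M}}(X,Q)$. In the paper $\mathcal{M}(Q)$ is \emph{not} defined as the point stabilizer: it is built from loops in the reduced digraph (Definition~3.7), and by Remark~3.9 the $\equiv$-relation (equal cluster sets) is imposed on products, which is precisely why $\mathcal{M}(Q)$ is declared commutative via Lemma~3.6(2). Since $\mu\equiv\mu'$ only compares cluster sets along the paths based at $(X,Q)$, it does not obviously force $\mu\overleftarrow{\mu'}$ to fix \emph{every} seed, so a priori the paper's $\mathcal{M}(Q)$ is a quotient of the stabilizer rather than the stabilizer itself, and ``$\mathcal{M}/\mathcal{M}(Q)$'' needs interpretation. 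This is harmless for $(1)\Rightarrow(2)$ (a quotient of a finite group is finite, and your embedding into $\mathrm{Sym}(\mathcal{S})$ already makes the stabilizer finite), and for $(2)\Rightarrow(1)$ it suffices to note that the $\equiv$-identifications only merge elements landing on the same seed, so the map $\mathcal{M}/\mathcal{M}(Q)\to\mathcal{S}$ remains well-defined and surjective. You flagged that the stabilizer interpretation is the load-bearing step; make that reconciliation with Definition~3.7 and Remark~3.9 explicit, and your argument is complete.
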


The paper is structured as follows: we provide a concise introduction to valued quivers and their mutation. The third section offers an overview of rooted mutation loops and the group generated by them. We introduce a reduction process on the cluster pattern (Definition 3.7) and employ the resulting cluster diagram to prove the main results, including Theorems 3.12 and 3.15. We will use $\mathcal{F}$ as a rational field in $n$ indeterminant over a filed $K$ of  zero characteristic.

%%% ----------------------------------------------------------------------
\section{Valued Quivers Mutation}

\begin{defns}
\begin{enumerate}
 
  \item \emph{An oriented valued quiver} of rank $n$ is a quadruple $Q=(Q_{0}, Q_{1}, V, d)$, where
  \begin{itemize}
 \item $Q_{0}$ is a  set of $n$ vertices labeled by $[1, n]$.
  \item $Q_{1}$ is a set of ordered pairs of vertices, that is $Q_{1}\subset Q_{0}\times Q_{0}$ such that; $(i, i)\notin Q_{1}$ for every $i\in Q_{0}$, and if $(i, j)\in Q_{1}$, then $(j, i)\notin Q_{1}$.
  \item $V=\{(d_{ij}, d_{ji})\in \mathbb{N}\times\mathbb{N} | (i, j)\in Q_{1}\}$, $V$ is called the valuation of $Q$. The weight of an edge $\alpha=(i, j)$ is the product $d_{ij}d_{ji}$ and is denoted by  $w_{i, j}$. \emph{The weight} of $Q$ is given by $w(Q)=max\{w_{ij}; (i, j) \in Q_{1}\}$.
  \item $d=(d_{1}, \cdots, d_{n})$ where $d_{i}$ is a positive integer for each $i$, such that $d_{i}d_{ij}=d_{ji}d_{j}$ for every $i, j\in [1, n]$.
   \end{itemize}
  In the case of $(i, j)\in Q_{1}$, then there is an arrow oriented from $i$ to $j$, and in notation, we shall use the symbol $\xymatrix{{\cdot}_{i} \ar[r]^{(d_{ij}, d_{ji})}&{\cdot}_{j}}$, or $\xymatrix{{\cdot}_{i} \ar[r]^{w_{i, j}}&{\cdot}_{j}}$ when the emphasis is on the weight of the edge. We will also use $rk(Q)$ for the rank of $Q$. The quiver $Q$ will be called \emph{simply-laced} if $w(Q)=1$. We will ignore labeling any edge of weight one. Also, a vertex $i\in Q_{0}$ is called a \emph{leaf } if there is exactly one vertex $j$ such that $w_{ij}\neq 0$ and $w_{kj}=0$, for all $k\in[1, n] \backslash \{j\}$.

  The neighborhood of the vertex $i$ in the quiver $Q$, denoted as $Q_{Nhb. _i}$, is defined as the set of all vertices connected to it, i. e. , $Q_{Nhb. i}=\{j\in Q_{0}; w_{ij}\neq 0\}$. 

      \item Let $\mathcal{S}_{n}$ be the symmetric group in $n$ letters. One can introduce an action of  $\mathcal{S}_{n}$ in the set of quivers of rank $n$ as follows: for a permutation $\tau$, the quiver $\tau(Q)$  is  obtained from $Q$ by permuting the vertices of $Q$ using $\tau$ such that for every edge $\xymatrix{{\cdot}_{i} \ar[r]&{\cdot}_{j}}$ in $Q$, the valuation $(d_{ij}, d_{ji})$ is assigned  to the edge $\xymatrix{{\cdot}_{\tau(i)} \ar[r]&{\cdot}_{\tau(j)}}$ in $\tau(Q)$. In such case, we say that $\tau(Q)$  is symmetric to $Q$.

\end{enumerate}
\end{defns}
We note that every oriented valued quiver corresponds to a skew symmetrizable matrix   $B(Q)=(b_{ij})$ given by
\begin{equation}\label{}
  b_{ij}=\begin{cases} d_{ij}, & \text{ if }(i, j)\in Q_{1}, \\
    0, & \text{ if }i=j, \\
-d_{ij}, & \text{ if }(j, i)\in Q_{1}.
    \end{cases}
\end{equation}
One can also see that every skew symmetrizable matrix $B$ corresponds to an oriented valued quiver $Q$ such that $B(Q)=B$.

 All our valued quivers are oriented, so in the rest of the paper, we will omit the word ``oriented". All quivers are of rank $n$ unless stated otherwise.
 We will also remove the word valued from the term ``valued quiver" when there is no confusion.

   \begin{defn}[\emph{Valued quiver mutation}]
 Let $Q$ be a valued quiver. The mutation  $\mu_{k}(Q)$ at a vertex  $k$  is defined through Fomin-Zelevinsky's mutation of the associated skew-symmetrizable matrix. The mutation of a skew symmetrizable matrix $B=(b_{ij})$ on the direction $k\in [1, n]$ is given by $\mu_{k}(B)=(b'_{ij})$, where
\begin{equation}
b'_{ij}=\begin{cases} -b_{ij}, & \text{if} \ k \in \{i, j\}, \\
   b_{ij}+\text{sign}(b_{ik})\max(0, b_{ik}b_{kj}), & \text{otherwise. }
   \end{cases}
  \end{equation}

\end{defn}
The following remarks provide a set of rules that are adequate to calculate mutations of valued quivers without using their associated skew-symmetrizable matrix.
\begin{rems}
\begin{enumerate}

  \item Let $Q=(Q_{0}, Q_{1}, V, d)$ be a valued quiver. The mutation $\mu_{k}(Q)$ at the  vertex  $k$   is described using the mutation of $B(Q)$ as follows: Let $\mu _{k}(Q)=(Q_{0}, Q'_{1}, V', d)$, we obtain $Q'_{1}$ and $V'$,   by altering  $Q_{1}$ and $V$, based on the following rules.
\begin{enumerate}
 \item replace the  pairs $(i, k)$ and $(k, j)$  with $(k, i)$ and $(j, k)$  respectively and, in the same manner, switch the components of the  ordered pairs of their valuations;
  \item if  $(i, k), (k, j)\in Q_{1}$, such that neither of $(j, i)$  or $(i, j)$ is in $Q_{1}$ (respectively $(i, j)\in Q_{1}$) add the  pair $(i, j)$ to $Q'_{1}$, and give it the valuation $(v_{ik}v_{kj}, v_{ki}v_{jk})$ (respectively change its valuation to $(v_{ij}+v_{ik}v_{kj}, v_{ji}+v_{ki}v_{jk})$);
 \item if $(i, k)$, $(k, j)$ and $(j, i)$ in $Q_{1}$, then we have three cases
 \begin{enumerate}
   \item if $v_{ik}v_{kj}<v_{ij}$, then keep $(j, i)$ and change its valuation to $(v_{ji}-v_{jk}v_{ki}, -v_{ij}+v_{ik}v_{kj})$;
   \item if $v_{ik}v_{kj}>v_{ij}$, then replace $(j, i)$ with $(i, j)$ and change its valuation to $(-v_{ij}+v_{ik}v_{kj}, |v_{ji}-v_{jk}v_{ki}|)$;
   \item if $v_{ik}v_{kj}=v_{ij}$,  then remove $(j, i)$ and its valuation.
 \end{enumerate}
 \item $d$ will stay the same in $\mu_{k}(Q)$.
\end{enumerate}

 \item One can see that; $\mu^{2}_{k}(Q)=Q$ and  $\mu_{k}(B(Q))=B(\mu_{k}(Q))$ at each vertex  $k\in [1, n]$ where $\mu_{k}(B(Q))$ is the mutation of the matrix $B(Q)$. For more information on mutations of skew-symmetrizable matrices, see, for example, [10, 14].
 \end{enumerate}
\end{rems}

\begin{defns}
\begin{itemize}
           
      \item \emph{A seed } in $\mathcal{F}$ of rank $n$ is a pair $(X, Q)$, where
   \begin{enumerate}
     \item The $n$-tuple $X=(x_{1}, \ldots, x_{n})$ is called a \emph{ cluster} where  $X=(x_{1}, x_{2}, \ldots, x_{n})\in \mathcal{F}^n$ is a transcendence basis of  $\mathcal{F}$ over $K$  that generates $\mathcal{F}$. Elements of $X$ will be called \emph{cluster variables}.
     \item $Q$ is an oriented valued quiver with $n$ vertices. The vertices of  $Q$ are labeled by numbers from $[1, n]$.

\item If $\mu=\mu_{i_{1}}\ldots \mu_{i_{k}}$ is a sequence of mutations, we will use the notations  $\{\mu\}: =\{\mu_{i_{1}}, \ldots, \mu_{k}\}$ and $\overleftarrow{\mu}: =\mu_{i_{k}}\cdots \mu_{i_{1}}$.
     \end{enumerate}
\end{itemize}
\end{defns}

\begin{defns}[Subquivers][11, 15]
Let $Q$ be a quiver of rank $n$. We can obtain a sub-mutation class of $[Q]$ by fixing a subset $I$ of $Q_{0}$ and applying all possible sequences of mutations at the vertices from the set $I$ only. We will refer to the vertices in the set $Q_{0}\backslash I$ as the \emph{frozen vertices}. In such a case, the quiver $Q$, with a set of frozen variables $Q_{0}\backslash I$, will be denoted by $Q_{I}$ and we will use $Q_{I}\leq Q$ to indicate that $Q_{I}$ is a subquiver of $Q$

\end{defns}

     \begin{exams}
     \begin{enumerate}
       \item Let
     \begin{equation}\label{}
     Q=\xymatrix{ \cdot_{4}& \cdot_{3}   \ar[l]_{(2,3)}\ar[r]^{(2,3)}&   \cdot_{2}\ar[d]^{(1,2)}&\cdot_{7}\ar[l]_{(2,1)}\\
  &\cdot_{6} \ar[r]&\cdot_{1}\ar[ul]^{(6, 2)}\ar[r]^{(2,3)}&\cdot_{5} }
     \end{equation}
     Consider the subquiver $Q_{I}$, with $I=\{1, 2, 3\}$,  and $d=(1, 2, 3)$. So, $rk(Q_{I})=3$ and $w(Q_{I})=12$. Here, $(6,2)$ is the valuation of the edge  $\xymatrix{{\cdot}_{1} \ar[r]&{\cdot}_{3}}$. Applying mutation at the vertex  $2$ produces the following  quiver

  \begin{equation}\label{}
      \nonumber \mu_{2}(Q_{I})=\xymatrix{ \cdot_{4}& \cdot_{3}   \ar[l]_{(2,3)}&   \cdot_{2} \ar[l]_{(3,2)}\ar[r]^{(1,2)}&\cdot_{7}\ar[dl]^{(2,2)}\\
  &\cdot_{6} \ar[r]&\cdot_{1}\ar[u]^{(2,1)}\ar[r]_{(2,3)}&\cdot_{5}. }
     \end{equation}

     \end{enumerate}

  \end{exams}

\begin{lem}  If $Q$ is simply-laced quiver then for every permutation $\sigma$ there is a sequence of mutations $\mu$ such that $\sigma(Q)=\mu(Q)$, for more details see [Theorem 2.6 in [1]].
\end{lem}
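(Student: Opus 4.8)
The plan is to reduce the claim to the single statement that, for a connected simply-laced quiver, relabelling by one transposition can be undone by mutations, and then to bootstrap this to an arbitrary $\sigma$ using the compatibility of mutation with relabelling. The starting observation is that the mutation rules (Remarks 2.3) depend only on the combinatorics of $Q$ and not on the names of its vertices, so for any permutation $\tau$ and any vertex $k$ one has the equivariance identity $\tau(\mu_{k}(Q))=\mu_{\tau(k)}(\tau(Q))$; passing from a single mutation to a sequence, $\tau(\mu(Q))=\mu^{\tau}(\tau(Q))$, where $\mu^{\tau}$ is the sequence obtained from $\mu$ by renaming every mutation index through $\tau$. I take as the core lemma the following: for every edge $\{i,j\}\in Q_{1}$ there is a mutation sequence $\nu_{ij}$ with $\nu_{ij}(Q)=(i\,j)(Q)$, that is, the relabelling by the transposition $(i\,j)$ lands inside the mutation class $[Q]$.

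Granting the core lemma, the reduction runs as follows. Since $Q$ is connected, the transpositions $(i\,j)$ attached to the edges of its underlying graph generate the whole symmetric group $\mathcal{S}_{n}$ (a spanning tree already furnishes $n-1$ such transpositions, and edge-transpositions of a connected graph are a classical generating set). Writing an arbitrary $\sigma$ as a product of edge-transpositions $\sigma=(i_{1}\,j_{1})\cdots(i_{r}\,j_{r})$, I build the required mutation sequence by induction on $r$ using the equivariance identity: if $(i_{2}\,j_{2})\cdots(i_{r}\,j_{r})(Q)=\mu'(Q)$ for some mutation sequence $\mu'$, then applying $(i_{1}\,j_{1})$ and using first equivariance and then the core lemma gives $(i_{1}\,j_{1})(\mu'(Q))=(\mu')^{(i_{1}j_{1})}((i_{1}\,j_{1})(Q))=(\mu')^{(i_{1}j_{1})}(\nu_{i_{1}j_{1}}(Q))$, which is again of the form $\mu(Q)$. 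Thus $\sigma(Q)=\mu(Q)$, completing the argument modulo the core lemma.

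The heart of the matter, and the step I expect to be the main obstacle, is the core lemma. On the level of the skew-symmetric matrix $B(Q)$ the target $(i\,j)(Q)$ is the simultaneous interchange of rows and columns $i$ and $j$, i.e. conjugation of $B(Q)$ by the transposition, so the content is the existence of a mutation sequence realizing this conjugation. The difficulty is that any mutation at $i$ or $j$ also alters the arrows among the common neighbours of $i$ and $j$, and these changes must eventually cancel so that only the $i,j$ rows and columns are swapped. As the small rank-three examples already show, there is no universal two- or three-step formula: the sequence must visit neighbouring vertices so that the spurious arrows it creates are subsequently removed. I would organize the verification by induction on the rank (peeling off a leaf, or more generally a vertex outside the neighbourhood of the edge) together with a direct bookkeeping on $B(Q)$ through the matrix mutation formula of Definition 2.2, checking that the net effect on every entry $b_{vw}$ with $v,w\notin\{i,j\}$ is trivial. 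Here the simply-laced hypothesis is essential: it keeps all relevant entries in $\{-1,0,1\}$ so that the local analysis has finitely many cases, and it guarantees that $(i\,j)(Q)$ is again simply-laced, so one only has to match arrow directions rather than valuations. Connectedness is likewise essential, since a transposition exchanging vertices in distinct connected components can never be realized by mutations. The detailed case analysis carrying out this bookkeeping is exactly the computation recorded in [Theorem 2.6 in [1]], to which the statement defers.
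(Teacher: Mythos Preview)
The paper itself supplies no proof of this lemma at all: the statement is simply recorded together with the pointer to Theorem~2.6 of~[1], and the argument is entirely outsourced. Your proposal therefore goes well beyond what the paper does, by giving the standard reduction that makes the citation do only the local work.

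Your reduction is correct. The equivariance $\tau(\mu_{k}(Q))=\mu_{\tau(k)}(\tau(Q))$ is immediate from the mutation rule, and the induction on the length of a factorisation of $\sigma$ into edge-transpositions is set up properly: because you always rewrite $(i_{1}\,j_{1})(\mu'(Q))$ as $(\mu')^{(i_{1}j_{1})}\bigl((i_{1}\,j_{1})(Q)\bigr)$, the core lemma is only ever invoked at the \emph{original} quiver $Q$, where $\{i_{1},j_{1}\}$ is by construction an edge. This sidesteps the trap of needing the transposition to be an edge of some intermediate mutated quiver. The observation that edge-transpositions of a connected graph generate $\mathcal{S}_{n}$ is standard and correctly used.

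Two remarks. First, you are right that connectedness is essential and that the lemma as printed omits it: for $Q=(\cdot_{1}\!\to\!\cdot_{2})\sqcup\cdot_{3}$ the permutation $(2\,3)$ gives a quiver outside $[Q]$, since no mutation can move the isolated vertex. This is a genuine hypothesis missing from the statement (tacitly assumed, as is common). Second, for the core lemma you should not oversell the difficulty: in the simply-laced case the length-five word $\mu_{i}\mu_{j}\mu_{i}\mu_{j}\mu_{i}$ already realises $(i\,j)$ on $Q$ whenever $i,j$ are joined by a single edge, essentially by the pentagon computation (this is exactly what Example~3.4(1) is recording and what the reference~[1] proves). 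So the ``detailed case analysis'' you anticipate collapses to a single uniform formula, and the simply-laced hypothesis is what makes the spurious arrows you worry about cancel automatically. With that in hand your outline becomes a complete proof rather than a sketch.
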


\begin{defn}[Seed mutation] Let $(X, Q)$ be a seed in $\mathcal{F}$. For each fixed $k\in [1, n]$,
 we define a new seed $\mu_{k}(X, Q)=(\mu _{k}(X), \mu _{k}(Q))$ by  setting $\mu _{k}(X)=(x'_{1}, \ldots, x'_{n})$  where
\begin{equation}\label{}
   x'_{i}=\begin{cases} x_{i}, & \text{ if } i\neq k, \\
    \frac{ \prod\limits_{b_{ji}> 0} x_{j}^{b_{ji}}+
    \prod\limits_{b_{ji}< 0} x_{j}^{-b_{ji}}}{x_{i}}, & \text{ if }i=k.
    \end{cases}
\end{equation}

And $\mu _{k}(Q)$  is the mutation of $Q$ at the vertex $k\in [1, n]$.

   \end{defn}

\begin{defns}[Cluster structure and cluster algebra]
\begin{itemize}
  \item The set of all seeds obtained by applying all possible sequences of mutations on the seed $(X, Q)$ is called the  \emph{cluster structure} of $(X,Q)$ and it will be denoted by $[(X,Q)]$.

        \item Let $\mathcal{X}$ be the union of all clusters in the cluster structure of $(X,Q)$. The  \emph{rooted cluster algebra} $\mathcal{A}(Q)$ is the $\mathbb{Z}$-subalgebra of $\mathcal{F}$ generated by $\mathcal{X}$. For simplicity we will omit the word ``rooted".

\end{itemize}
\end{defns}
One can see that any seed in the cluster structure of  $(X,Q)$ generates the same cluster structure.

\begin{defn}(\textbf{Rooted cluster digraph of an initial seed $(X,Q)$}). Fix an initial seed $(X,Q)$.
 The \emph{rooted cluster digraph} $\mathbb{G}(Q)$ of the initial seed $(X,Q)$ is an $n-$regular directed ``connected" graph formed as follows
 \begin{itemize}
   \item The vertices are assigned to be the elements of the cluster class $[(X,Q)]$  such that the endpoints of any edge are obtained from each other by the quiver mutation in the direction of the edge label; 
   \item The edges are $2$-cycles, where each $2$-cycles is  associated to one of the single mutations $\mu_{1}, \mu_{2}, \ldots, \mu_{n}$, such that each arrow in the same $2$-cycle is labeled by the same single mutation.  

  So, any two adjacent vertices in $\mathbb{G}(Q)$ would look like the following 
\begin{equation}\label{}
  \nonumber \begin{tikzpicture}[
       decoration = {markings,
                     mark=at position .5 with {\arrow{Stealth[length=2mm]}}},
       dot/.style = {circle, fill, inner sep=2.4pt, node contents={},
                     label=#1},
every edge/.style = {draw, postaction=decorate}
                        ]

\node (s) at (0,0) [dot=$s$];
\node (s') at (2,0) [dot=right:$s'$];

\path      
        (s) edge (s')   
        (s') edge[bend left] (s);
    \end{tikzpicture}
\end{equation}
where both arrows are labeled with the same single mutation, say  $\mu_{j}$, $j\in [1, n]$ and $s'=\mu_{j}(s)$, where $s \in [(X,Q)]$. 
\item All paths in  $\mathbb{G}(Q)$ have a finite number of possible revisits to each vertex. In other words, if $P$ is a path in $\mathbb{G}(Q)$, it is of finite length, meaning every vertex in $P$ will be revisited at most a finite number of times along $P$.
  
 \end{itemize}

\end{defn}

\begin{defn}
 A cluster algebra $\mathcal{A}(Q)$ is called \emph{finite mutation type} if the rooted mutation class $[(X,Q)]$ contains finitely many seeds.
\end{defn}

%%% ----------------------------------------------------------------------

\section{Rooted Mutations Groups}Fix an initial seed $(X,Q)$.

\begin{defn}
Let $M$ denote the set of all sequences (formal words) formed from the elements of the set ${\mu_{1},\ldots,\mu_{n}}$. A relation on $M$ is a sequence of mutations that preserves every seed in $[(X,Q)]$; such mutation sequences are referred to as \emph{global mutation loops}. The group generated by elements of $M$ subject to global mutation loops as the relations is called the \emph{global mutations group} of $\mathcal{A}(Q)$, and will be denoted by $\mathcal{M}$.
\end{defn}

It is important to note that this definition of global mutation loops diverges from the one provided in [15]. Here, the group relations on $M$ stem from the action on the cluster structure of the entire seed. Conversely, in Definition 3.1 of [15], the relations are determined by the action solely on the cluster structure, $[Q]$, of the quiver $Q$.

\begin{rem} Each element $\mu$ of $M$ corresponds to a unique directed subgraph (path) in the rooted cluster diagraph $\mathbb{G}(Q)$. Such path will be called \emph{the rooted path of $\mu$} and will be denoted by $P_{\mu}$. For simplicity we will call it \emph{the path} of $\mu$. Also, we will be swinging between $\mu$ and $P_{\mu}$ freely.
\end{rem}
\begin{proof} 
                Let $\mu=\mu_{i_{k}}\cdots \mu_{i_{1}}$ be a sequence of mutations. Assign the following path (directed subgraph)  of $\mathbb{G}(Q)$ to $\mu$
\begin{equation}\label{}
   \nonumber P_{\mu}: =\xymatrix{\cdot_{(X,Q)} \ar[r]^{\mu_{i_{1}}} &   \cdot_{\mu_{i_{1}}((X,Q))} \ar[r]^{\mu_{i_{2}}}  & \cdot \cdots \cdot \ar[r]^{\mu_{i_{k}}}& \cdot_{\mu((X,Q))}}.
\end{equation}
The uniqueness follows directly from the well-defined property in the definition of mutations, where each single mutation applied to a specific seed produces a distinct seed.

\end{proof}

\begin{defns}
  Fix an initial seed $(X,Q)$. We have the following 

 \begin{enumerate}
                      
                      \item The set of all seeds that appear on the path $P_{\mu}$ in $\mathbb{G}(Q)$, of a mutations sequence $\mu$, will be called \emph{mutation class} of  $\mu$,  and will be denoted by $[\mu]$, and  $[\mu]_{Q}$ for the set of all quivers only that appear in $[\mu]$.
                      
                      \item If $\mu$ satisfies that $\mu(X,Q)=(X,Q)$, then it will be called a \emph{rooted mutation loop} of $(X,Q)$. The set of all rooted mutation loops of $(X,Q)$ will be denoted by $\mathbf{m}_{Q}$ and the  corresponding set of subgraphs of the digraph $\mathbb{G}(Q)$ will be denoted by $\mathbf{m}_{\widetilde{Q}}$.

  \item \emph{A cancelled cluster variable} in $P_{\mu}$ (respect to $\mu$) is a cluster variable that is  produced by  any sub path  of $P_{\mu}$ of the form  $\xymatrix{\cdot_{(Y, Q')}\ar[r]^{\mu_{j}}&\cdot_{\mu_{j}(Y,Q')} \ar[r]^{\mu_{j}}& \cdot_{(Y,Q')}}$ or a $2$-cycle in $\mathbb{G}(Q)$ (respect to the subsequence $\mu^{2}_{j}$ of $\mu$).   \emph{The cluster set} of a sequence of mutations $\mu$ (respect to $P_{\mu}$), denoted by $\mu_{c}$, is the set of all non-canceled cluster variables produced form $(X, Q)$ over the path of $P_{\mu}$.
      
      \item The following relation $\equiv$ defines an equivalence relation on $\mathbf{m}_{Q}$ (respect to $\mathbf{m}_{\widetilde{Q}}$). Let $\mu$ and $\mu'$ be two elements in $\mathbf{m}_{Q}$ (respect to $P_{\mu}$ and $P_{\mu'}$ in $\mathbf{m}_{\widetilde{Q}}$)  such that $\mu'\neq \overleftarrow{\mu}$. Then  we define

          \begin{equation}\label{}
            \nonumber \mu \equiv \mu' \   \text{if and only if} \   \mu_{c}=\mu'_{c}.
          \end{equation}
  
 In such case, we say that $\mu$ and $\mu'$ are \emph{identical} on $(X,Q)$.   We will denote the digraph $\mathbb{G}(Q)$ subject to the equivalence relation $\equiv$ by  $\mathbb{\overline{G}}(Q)$. 
  \item A  path $P_{\mu}$ of a mutation sequence $\mu=\cdots\mu_{i_{k}}\cdots\mu_{i_{1}},k\geq 1$ (respect to $\mu$) is said to be of \emph{finite cluster order} if there is a mutation $\mu_{i_{m}} \in \{\mu\}$ such that 
  
  \begin{equation}\label{}
   \nonumber \mu_{c}=(\mu_{i_{m}}\cdots\mu_{i_{1}})_{c}.
  \end{equation}
  The smallest such natural number $m$ will be called \emph{the cluster order} of $P_{\mu}$ (respect to $\mu$). In the rest of the article, we will omit the word ``cluster" from the term ``cluster order" if no confusion. 
\end{enumerate}
  \end{defns}

\begin{exams}
\begin{enumerate}
  \item For every simply-laced quiver $Q$, let $i$ and $j$ be any two adjacent vertices in $Q$. Then we have $\mu_{[ij]}\equiv \mu_{[ji]}$, where $\mu_{[ij]}=\mu_{i}\mu_{j}\mu_{i}\mu_{j}\mu_{i}$.
  \item Let $\mu_{i_{k}}\cdots\mu_{i_{1}}$ be a sequence of mutations formed of mutations associated to mutually non-adjacent vertices then $\mu_{i_{k}}\cdots\mu_{i_{1}}\equiv \mu_{i_{\sigma(k)}}\cdots\mu_{i_{\sigma(1)}}$ for any permutation $\sigma$ in the symmetric group $\mathcal{S}_{k}$.
  \item Let $Q$ be the following quiver 
  \begin{equation}\label{}
\nonumber Q=\xymatrix{
\cdot_{k} \ar[d] & \ar[l]\cdot_{j} \\
	\cdot_{i}  \ar[ur]}.
  \end{equation}
Then the mutation sequence $(\mu_{i}\mu_{j}\mu_{k})^{8}$ is  rooted mutation loop for $((x_{i},x_{j},x_{k}),Q)$.
\end{enumerate}
\end{exams}

\begin{prop} If $\mu \equiv \mu'$ then $\{\mu\}=\{\mu'\}$ and  $[\mu]=[\mu']$. In other words, the cluster set characterizes its sequence of mutations. 
\end{prop}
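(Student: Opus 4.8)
The plan is to show that the cluster set $\mu_c$ carries enough information to rebuild the entire directed path $P_\mu$ in $\mathbb{G}(Q)$, and hence both the seed set $[\mu]$ and the direction set $\{\mu\}$, in a way that depends only on $\mu_c$ together with the fixed initial seed $(X,Q)$. Once such a reconstruction procedure is in place, the hypothesis $\mu\equiv\mu'$, i.e. $\mu_c=\mu'_c$, immediately forces $[\mu]=[\mu']$ and $\{\mu\}=\{\mu'\}$, since feeding identical data into the same procedure yields identical output. I would organize the reconstruction around the two outputs separately.

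First I would recover the directions. To each non-canceled cluster variable $v$ on $P_\mu$ I attach the direction $p(v)\in[1,n]$ of the single mutation that first creates $v$ from its predecessor seed; concretely, $p(v)$ is the label at which two consecutive clusters of $P_\mu$ differ at the moment $v$ enters. The point to establish is that $p(v)$ is intrinsic to $v$, independent of where on any path $v$ happens to be produced: since a mutation $\mu_k$ alters exactly the label-$k$ entry of a seed and permutes no labels, a given cluster variable occupies one and the same label in every seed of $[(X,Q)]$ in which it occurs, and that common label is $p(v)$. Granting this, $\{\mu\}=\{\,p(v):v\in\mu_c\,\}$ is a set built from $\mu_c$ alone, so $\{\mu\}=\{\mu'\}$.

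Next I would recover the seeds. Starting from $C_0=(x_1,\dots,x_n)$ I build the chain of clusters $C_0,C_1,\dots$ underlying $P_\mu$: at each step exactly one entry changes, and by the previous paragraph the entering variable $v$ must sit at its forced label $p(v)$, while the leaving variable is the current label-$p(v)$ entry. Because a seed is determined by its cluster (in finite type the quiver is recovered from the cluster) and consecutive clusters share $n-1$ entries, the successor of a given cluster is pinned down by which element of $\mu_c$ is introduced; running this forward from $C_0$ reassembles the chain, and hence the set $[\mu]$, up to the single global reversal $\overleftarrow{\mu}$ that the definition of $\equiv$ already sets aside. This gives $[\mu]=[\mu']$.

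The hard part will be the uniqueness of this reconstruction, and I expect it to split into two issues. The first is the well-definedness of the label $p(v)$: the whole argument collapses if some cluster variable could reappear at two distinct labels, so I must rule this out, which I plan to do from the fact that mutation never permutes labels together with the involutivity $\mu_k^2(Q)=Q$ of Remark~2.3. The second and more delicate issue is the interaction with cancellation: a rooted loop may contain backtracking detours whose intermediate variables are canceled and hence invisible to $\mu_c$, and one must verify that such detours cannot smuggle in seeds that would distinguish $[\mu]$ from $[\mu']$. I would handle this by first replacing $\mu$ with a backtrack-free representative using $\mu_k^2=\mathrm{id}$ and checking that this replacement leaves $\mu_c$, $\{\mu\}$, and $[\mu]$ unchanged, so that the reconstruction may be carried out on the reduced skeleton. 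Establishing precisely this compatibility between the non-canceled skeleton and the three invariants is where I expect the genuine work to lie.
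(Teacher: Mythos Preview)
Your reconstruction strategy is quite different from the paper's, which argues by contradiction. For $\{\mu\}=\{\mu'\}$ the paper invokes \emph{denominator vectors}: if some $\mu_j\in\{\mu\}\setminus\{\mu'\}$, then the initial variable $x_j$ must occur in the denominator of some element of $\mu_c$, whereas $x_j$ can never occur in a denominator along $P_{\mu'}$ since direction $j$ is never mutated there, contradicting $\mu_c=\mu'_c$. For $[\mu]=[\mu']$ the paper first matches the \emph{quiver} sets by taking the first quiver $Q^\star$ on $P_\mu$ absent from $[\mu']_Q$ and deriving a contradiction from the neighbourhood $Q^\star_{Nhb.j}$ of the vertex $j$ mutated at that step; it then passes to seeds using that a cluster determines its seed.

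Your plan contains a step that fails as written. You propose to pass to a backtrack-free representative and then ``check that this replacement leaves $\mu_c$, $\{\mu\}$, and $[\mu]$ unchanged''; but the last two are \emph{not} preserved by backtrack reduction. If $\mu=\alpha\,\mu_k\mu_k\,\beta$ with $\mu_k$ occurring nowhere else in the word, then deleting the pair $\mu_k\mu_k$ removes $\mu_k$ from $\{\mu\}$ and removes the seed $\mu_k\beta(X,Q)$ from $[\mu]$. Hence your formula $\{\mu\}=\{\,p(v):v\in\mu_c\,\}$ is in general only the inclusion $\{\,p(v):v\in\mu_c\,\}\subseteq\{\mu\}$, and your chain $C_0,C_1,\dots$ rebuilds the reduced skeleton rather than $[\mu]$ itself. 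The denominator-vector device is exactly what allows the paper to witness a direction $\mu_j$ without reconstructing where on the path it was applied, and the first-disagreement-plus-neighbourhood argument is what lets it compare $[\mu]_Q$ and $[\mu']_Q$ locally rather than rebuilding the whole walk; these are the structural ingredients your outline is missing.
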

\begin{proof} First we show that $\{\mu\}=\{\mu'\}$
\begin{itemize}
 \item Suppose that there is $\mu_{j} \in \{\mu \} \setminus \{\mu' \}$. Obviously $\mu_{j}$ can not appear as a first mutation in $\mu$. Otherwise the non-cancelable cluster variable $\mu_{j}(x_{j})\in \mu'_{c}$ that is because $\mu_{c}=\mu'_{c}$, which is a contradiction with $\mu_{j} \notin \{\mu'\}$.  Then the  cluster variable produced by $\mu_{j}$ in $\mu_{c}$ is not a first generation cluster variable. Now, assume that $\mu_{j}$ appears after applying some other mutations. Then the cluster variable $x_{j}$ must occur in the denominator vector of some cluster variable in $\mu_{c}$ and then in some denominator vector of the same cluster variable in $\mu'_{c}$. But since $\mu_{j} \notin \{\mu'\}$ then the cluster variable associated to $j$ can not appear in the cluster set of $\mu'$ or in any of the denominator vectors of the cluster variables of $\mu'_{c}$ which contradicts with $\mu_{c}=\mu'_{c}$ thanks to the uniqueness of the denominator vectors. 
\item Secondly we show $[\mu]=[\mu']$. This part is divided into two main parts 
\begin{enumerate}
  \item [(1)] \textbf{The equality of the quivers sets of $\mu$ and $\mu'$. }  Let  $Q^\star \in [\mu]_{Q}$ be a quiver on $P_{\mu}$ that does not belong to the quivers set $[\mu']_{Q}$. In such case, there is a sub sequence of mutations $\mu^{\star}$ of $\mu$ where $\mu^{\star}=\mu_{j}\mu_{i_{k}}\cdots\mu_{i_{1}}$ such that $\mu^{\star}(Q)=Q^\star$. Without loss of generality, we can assume that $\mu_{i_{t}}\cdots\mu_{i_{1}}(Q)$ is a quiver in $[\mu]_{Q}$ for every $t \in [1,k]$, i. e., $Q^\star$  is the first quiver to appears on the path $P_{\mu}$ that is not in  $P_{\mu'}$. One can assume that the change in $Q^\star$, that is different from any quiver in the quiver set of $\mu$, is in the shape of the subquiver $Q'_{Nhb.j}$ that forming the neighborhood of $j$. Let $x^{\star}_{j}$ be the cluster variable produced by applying $\mu^{\star}$. If  $k=0$, then $x^{\star}_{j}$ is a cluster variable of a single mutation sequence. Since  $x^{\star}_{j} \in \mu'_{c}$  and the initial quiver is fixed then $\mu^{\star}(Q)=\mu_{j}(Q)$ is in deed in the quiver set $[\mu']_{Q}$ of $\mu'_{c}$ which is a contradiction. Now, assume that $t\geq1$. Since  $x^{\star}_{j} \in \mu'_{c}$ then we must have a quiver $Q''$ that appears in $[\mu']_{Q}$ over the path $P_{\mu'}$ which has a subquiver that is symmetric or identical to the subquiver formed of the vertices of $Q^\star_{Nhb. j}$. Which means that the difference between $Q''$ and $Q^\star$ appears outside the subquiver of $Q^\star_{Nhb.j}$. Then if all quivers $\mu_{i_{t}}\cdots\mu_{i_{1}}(Q)$ for all $1<t\leq k$ are not similar to any quiver  in $[\mu']_{Q}$, then we would not obtain any subquiver that is similar to $Q^\star_{Nhb.j}$ over the Path $P_{\mu'}$.  Hence, there is $1<t\leq k$ such that $\mu_{i_{t}}\cdots\mu_{i_{1}}(Q)$ is not similar to any quiver in the quiver set $[\mu']_{Q}$ of $\mu^\star$ which contradicts with the assumption that $Q^\star$ is the first such quiver.

  \item [(2)] \textbf{The equality of the sets of  seeds  $[\mu]$ and $[\mu']$. } First we will show that the sets of  clusters of $\mu$ and $\mu'$ are equal considering the clusters as sets. This is an obvious case since if $Y$ is a cluster that does not belong to the cluster set of $\mu'$, then at least one cluster variable in $\mu_{c}$ that is not in $\mu'_{c}$, which is a contradiction.
  Secondly, we show the equality of the clusters sets of $\mu$ and $\mu'$ considering the clusters as $n$-tuples. Suppose that $(Y, Q')$ is not in $[\mu']$ as a whole seed. Since each seed is characterized by its cluster so we will be done if we show that the sets of clusters of $\mu$ and $\mu'$ are identical, which is equivalent to show  that each cluster in $P_{\mu}$ equals a cluster in $P_{\mu'}$ as sets. If $Y$ is a cluster that is not the cluster set of $P_{\mu'}$ then its uniquely associated quiver $Q_{Y}$ is not in the quiver set of $P_{\mu'}$. Hence, $Q_{Y}$ is not in the quiver set of $\mu$ which is a contradiction with the first part (1) of the proof above.
\end{enumerate}

\end{itemize}
\end{proof}

\begin{lem} The following statements hold:
\begin{enumerate}
  \item For any $\mu \in \mathcal{M}$, we have  $(\mu\overleftarrow{\mu})_{c}=1_{c}=\emptyset$. In other words the path  $P_{\mu\overleftarrow{\mu}}$ contains no cluster variables. In particular,  the  sequences of mutations $\mu\overleftarrow{\mu}$ are the only sequences with empty cluster sets. 
\item For any two rooted mutation loops $\mu$ and $\mu'$, one can see that $\mu\mu'$ and $\mu'\mu$ are identical on $(X, Q)$.
  
  \item If $\mathcal{A}(Q)$ is a finite type cluster algebra, then we have the following 
  \begin{enumerate}
    \item any mutation sequence $\mu$ is of the form $\mu=\mu^{k+1}\mu^{(k)}\cdots\mu^{(1)}$ where each of $\mu^{(j)}$ is a rooted mutation loop for $ 0\leq j\leq k$ and $\mu^{k+1}$ is a subsequence of some rooted mutation loop; 
    \item for every cluster variable $y$ there is a rooted mutation loop $\mu$ such that $y \in \mu_{c}$.
  \end{enumerate}
      
\end{enumerate}

\end{lem}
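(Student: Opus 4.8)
The plan is to treat each assertion through the geometry of the path $P_{\mu}$ in $\mathbb{G}(Q)$, exploiting that every single mutation is an involution on seeds ($\mu_{j}^{2}=\mathrm{id}$) and that a produced variable survives in the cluster set precisely when its producing edge is not immediately retraced. For (1) I would first establish the forward inclusion by telescoping. Writing $\mu=\mu_{i_{1}}\cdots\mu_{i_{k}}$, the word $\mu\overleftarrow{\mu}=\mu_{i_{1}}\cdots\mu_{i_{k}}\mu_{i_{k}}\cdots\mu_{i_{1}}$ applies, along $P_{\mu\overleftarrow{\mu}}$, the innermost pair $\mu_{i_{k}}\mu_{i_{k}}$ as a backtracking $2$-cycle, which by the definition of a canceled variable contributes nothing to the cluster set; after deleting it the pair $\mu_{i_{k-1}}\mu_{i_{k-1}}$ becomes an adjacent backtrack, and so on. Inductively every produced variable is canceled, so $(\mu\overleftarrow{\mu})_{c}=\emptyset=1_{c}$. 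For the converse I would argue that if $\nu_{c}=\emptyset$ then along $P_{\nu}$ every produced variable is canceled, which forces an adjacent repeated mutation $\mu_{j}\mu_{j}$; removing it yields a strictly shorter sequence still with empty cluster set, and this reduction terminates at the empty word, so that reassembling the removed pairs from the outside in exhibits $\nu$ in the backtracking form $\mu\overleftarrow{\mu}$.

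For (2), since $\mu$ and $\mu'$ are rooted loops, both $P_{\mu'}$ and $P_{\mu}$ are closed at $(X,Q)$, so $P_{\mu\mu'}$ is the concatenation of $P_{\mu'}$ followed by $P_{\mu}$, while $P_{\mu'\mu}$ is $P_{\mu}$ followed by $P_{\mu'}$. I would show that in either order the set of non-canceled variables equals the union $\mu_{c}\cup\mu'_{c}$: a variable canceled inside one loop stays canceled, and the only possible extra cancellation occurs at the junction seed $(X,Q)$, namely when the last mutation of one loop coincides with the first mutation of the other. I would check that such a junction cancellation is governed solely by the two mutations adjacent to the base point, which form the same unordered pair in both concatenations, so it removes the same variable in both cases; hence $(\mu\mu')_{c}=\mu_{c}\cup\mu'_{c}=(\mu'\mu)_{c}$, i.e. $\mu\mu'\equiv\mu'\mu$.

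For (3), finite type means $[(X,Q)]$, and hence $\mathbb{G}(Q)$, is a finite connected graph. For (a) I would traverse $P_{\mu}$ and cut it at each visit to $(X,Q)$: each maximal segment between two consecutive visits is a rooted loop $\mu^{(j)}$, and the segment after the last visit is the tail $\mu^{k+1}$; since $\mathbb{G}(Q)$ is finite and connected, the endpoint $\mu(X,Q)$ of the tail can be joined back to $(X,Q)$, so $\mu^{k+1}$ is a subsequence (a prefix) of a rooted loop. For (b), given a cluster variable $y$, it is produced along some path from $(X,Q)$, so $y\in\nu_{c}$ for a suitable sequence $\nu$; by connectivity and finiteness I extend $\nu$ to a loop $\mu$ based at $(X,Q)$, choosing the completion so that the edge producing $y$ is never immediately retraced, which keeps $y$ non-canceled and yields $y\in\mu_{c}$.

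The main obstacle I anticipate is the bookkeeping of cancellations rather than the finiteness input. In the converse of (1) one must verify that deleting a single backtracking pair neither creates nor destroys cancellations elsewhere, and in (2) one must argue that reversing the order of the two loops leaves the surviving cluster set unchanged; both reduce to showing that the $2$-cycle cancellation condition is genuinely local and order-insensitive. By contrast, the content of (3) is comparatively routine once the connectivity of $\mathbb{G}(Q)$ and the ability to complete tails and paths to rooted loops are established.
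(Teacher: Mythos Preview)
Your outline matches the paper's: telescoping for (1), concatenation of loops for (2), and cutting $P_\mu$ at returns to $(X,Q)$ for (3)(a)--(b). The paper actually disposes of (2) in a single sentence (``follows directly from the definition of rooted mutation loops''), so you are supplying more detail than it does; for (3)(a) the paper routes through the notion of finite cluster order before cutting at returns to $(X,Q)$, while you cut directly using finiteness of $\mathbb{G}(Q)$, which is cleaner but equivalent.

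There is one concrete slip in your argument for (2). You assert that the pair of mutations adjacent to the junction seed $(X,Q)$ is the \emph{same} unordered pair in $\mu\mu'$ and in $\mu'\mu$. It is not: in $\mu\mu'$ the junction pair is (last step of $\mu'$, first step of $\mu$), while in $\mu'\mu$ it is (last step of $\mu$, first step of $\mu'$), and these differ in general. The repair is simpler than what you attempt: any backtracking $2$-cycle sitting at the junction has $(X,Q)$ itself as its middle seed, so the variable it would ``cancel'' is an initial cluster variable $x_j$, which was never counted in the cluster set to begin with. Hence $(\mu\mu')_c=\mu_c\cup\mu'_c=(\mu'\mu)_c$ with no case analysis on the junction at all.

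Your stated worry about the converse of (1) is well placed, and your reassembly step ``from the outside in'' does not quite work as written: it presumes the deleted backtracking pairs are nested, but $\mu_1\mu_1\mu_2\mu_2$ has empty cluster set yet is not literally of the form $\nu\overleftarrow{\nu}$ as a word. The paper's own argument (``two identical seeds in the middle sandwiched by two other identical seeds'') is no more careful on this point, so the uniqueness claim should be read modulo the relation $\mu_i^2=1$, i.e.\ as a statement in $\mathcal{M}$ rather than about raw words; under that reading your reduction-to-the-empty-word argument suffices.
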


\begin{proof} \begin{enumerate}
                \item The uniqueness is the only part requiring validation. For a non-identity mutation sequence $\mu$, if $\mu_{c}=0$, then every cluster variable produced over $P_{\mu}$ becomes a canceled cluster variable. This implies that $P_{\mu}$ comprises an even-length sequence, with a cyclic process of ``build and break" under the action of $\mu$. Consequently, $\mu$ can be decomposed into two sequences that mirror each other. In this structure, $P_{\mu}$ has two identical seeds in the middle  sandwiched by two other identical seeds and so forth, suggesting $\mu$ is represented as the product $\mu'\overleftarrow{\mu'}$.
   \item The proof of this part follows directly from the definition of rooted mutation loops.  
    \item \begin{enumerate}         
    \item If $\mathcal{A}(Q)$ is a cluster algebra of finite type, then any path $P_{\mu}$ in $\mathbb{G}(Q)$, where $\mu=\mu_{i_{k}}\cdots \mu_{i_{1}}$, encompasses only a finite set of cluster variables. Consequently, $P_{\mu}$ has finite cluster order. Therefore, there exists an index $1\leq m \leq k$ such that $\mu_{c}=(\mu_{i_{m}}\cdots\mu_{i_{1}}){c}$, indicating that $\mu$ ceases to generate new cluster variables after the step $\mu_{i_{m}}$. One would thus expect the initial seed $(X, Q)$ to appear on $P_{\mu}$. If $(X, Q)$ emerges after the step $\mu_{i_{t_{1}}}$, then $\mu=\mu_{i_{k}}\cdots\mu_{i_{t_{1}}+1}\mu^{(1)}$, where $\mu^{(1)}$ denotes the rooted mutation loop $\mu_{i_{t_{1}}}\cdots\mu_{i_{1}}$. Continuing this process of identifying rooted mutation loops in $P_{\mu}$ yields $\mu=\mu^{k+1}\mu^{(k)}\cdots\mu^{(1)}$, where each $\mu^{(j)}$ represents a rooted mutation loop for $0\leq j\leq k$ and $\mu^{k+1}$ denotes the remaining subsequence of mutations, which could be the entire mutation sequence $\mu$ or just one mutation.
\item This observation is a direct consequence of Part (a) above. If $y$ denotes a cluster variable, then a sequence of mutations can be devised that does not cancel out $y$ and eventually reproduces $(X,Q)$.
                 
     \end{enumerate}          
\end{enumerate}
\end{proof}

The following definition is inspired by Proposition 3.5 and Lemma 3.6.
\begin{defns} \begin{itemize}
               \item \textbf{The Reduction Process. } Let $\mathbb{\overline{G}}(Q)$ be the cluster digraph of $(X,Q)$ under the equivalence relation $\equiv$ on $(X,Q)$. We define a sub-digraph $\mathbb{\widetilde{G}}(Q)$ of $\mathbb{\overline{G}}(Q)$ by modifying it through the following process:
               \begin{enumerate} 
                 \item [(1)] Let $P_{\mu}$, be a path in $\mathbb{\overline{G}}(Q)$ where $\mu=\cdots\mu_{i_{k}}\cdots \mu_{i_{1}}$  with a  cluster rank $l< \infty$. Then we identify $\mu$ with $\mu_{i_{l}}\cdots \mu_{i_{1}}$ and consequently identify $P_{\mu}$ with $P_{\mu_{i_{l}}\cdots \mu_{i_{1}}}$;
              
                 \item [(2)]   Let $P_{\mu}$ be a path in $\mathbb{\overline{G}}(Q)$ with $\mu=\mu^{k+1}\mu^{(k)}\cdots\mu^{(1)}$ where each of $\mu^{(j)}$ is a rooted mutation loop of $(X,Q)$. Then  $P_{\mu}$  will be replaced with the reduced path $P_{\mu'}$ where $\mu'$ is obtained from $\mu$ by applying the following steps: 
                     \begin{enumerate}
                       \item First we remove every rooted mutation loop that is identical to $\mu^{(1)}$, and keep $\mu^{(1)}$.
                       \item Next step, is to remove every rooted mutation loop that is identical to $\mu^{(t_{1})}$, and keep $\mu^{(t_{1})}$, where $t_{1}$ is the smallest number such that $\mu^{(t_{1})}$ and $\mu^{(1)}$ are not identical.
                       \item And continue on doing this process until we obtain 
                        
                        \begin{equation}\label{}
                          \nonumber \mu'=\mu^{k+1}\mu^{(t_{l})}\cdots\mu^{(t_{1})}\mu^{(1)},
                        \end{equation}
                        where non of the mutations loops $\mu^{(t_{l})}, \ldots, \mu^{(t_{1})}$, and $\mu^{(1)}$ are  identical on $(X, Q)$. 
                     \end{enumerate}
                   The graph obtained from $\mathbb{\overline{G}}(Q)$ after applying the above reduction process will be denoted by $\mathbb{\widetilde{G}}(Q)$. 
               
              \end{enumerate}

               \item \textbf{The Rooted Mutation Group}. Let $\mathfrak{m}_{Q}$ be the set of all rooted mutation loops of $(X,Q)$ in the reduced cluster diagraph $\mathbb{\widetilde{G}}(Q)$. The group generated by elements of $\mathfrak{m}$  will be called the \emph{rooted mutation group} of $(X,Q)$ and  will  be denoted by $\mathcal{M}(Q)$. 
             \end{itemize}
         
\end{defns}
\begin{exam}
Let $\mu=\mu^{7}\mu^{(6)}\mu^{(5)}\cdots\mu^{(1)}$ where $\mu^{(6)}\equiv \mu^{(4)}$ and $\mu^{(3)}\equiv\mu^{(2)}\equiv\mu^{(1)}$. Then $\widetilde{\mu}=\mu^{7}\mu^{(4)}\mu^{(5)}\mu^{(1)}$.
\end{exam}

    \begin{rems}
               \begin{enumerate}
                 \item The reduction process is automatically applied to the product in the group $\mathcal{M}(Q)$ so the product of any two elements would correspond to a path in $\mathbb{\widetilde{G}}(Q)$.
                 \item The group $\mathcal{M}(Q)$ is a commutative group, thanks to Part 2 of Lemma 3.6. 
               \end{enumerate}
                
             \end{rems}

\begin{prop} For every cluster algebra $\mathcal{A}(Q)$, the rooted mutation group $\mathcal{M}(Q)$ is independent of the choice of the initial seed.  
\end{prop}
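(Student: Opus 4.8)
The plan is to prove the concrete statement that for any two seeds of the common cluster structure the associated rooted mutation groups are isomorphic, by means of the change-of-basepoint construction familiar from the fundamental group of a connected graph. Let $(X,Q)$ be the chosen initial seed and let $(Y,Q')$ be any other seed of $[(X,Q)]$, and write $\mathcal{M}(Q)$, $\mathcal{M}(Q')$ for the rooted mutation groups computed with these two seeds as roots. Since every seed of $[(X,Q)]$ generates the same cluster structure, both seeds are vertices of the one connected digraph $\mathbb{G}(Q)$, so there is a mutation sequence $\nu$ with $\nu(X,Q)=(Y,Q')$. I would then define
$$\phi\colon\mathcal{M}(Q)\longrightarrow\mathcal{M}(Q'),\qquad \phi(\mu)=\nu\,\mu\,\overleftarrow{\nu},$$
together with its candidate inverse $\psi(\eta)=\overleftarrow{\nu}\,\eta\,\nu$.

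First I would dispatch the group-theoretic bookkeeping. Because each single mutation is an involution, $\overleftarrow{\nu}\nu$ and $\nu\overleftarrow{\nu}$ fix every seed, hence are global mutation loops and act as the identity (this is the uniqueness part of Lemma 3.6(1)). Using $\overleftarrow{\nu}(Y,Q')=(X,Q)$, then $\mu(X,Q)=(X,Q)$, then $\nu(X,Q)=(Y,Q')$, one checks $\phi(\mu)(Y,Q')=(Y,Q')$, so $\phi(\mu)$ is genuinely a rooted loop at $(Y,Q')$. The identity $\overleftarrow{\nu}\nu=\mathrm{id}$ gives $\phi(\mu_1)\phi(\mu_2)=\nu\mu_1(\overleftarrow{\nu}\nu)\mu_2\overleftarrow{\nu}=\nu\mu_1\mu_2\overleftarrow{\nu}=\phi(\mu_1\mu_2)$, so $\phi$ is a homomorphism, and $\psi\phi=\mathrm{id}$, $\phi\psi=\mathrm{id}$ make it bijective.

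I expect the main obstacle to be showing that $\phi$ respects the data built into the definition of the rooted mutation group, namely the equivalence relation $\equiv$ and the reduction process of Definition 3.7, so that it descends to the reduced digraphs $\widetilde{\mathbb{G}}(Q)$ and $\widetilde{\mathbb{G}}(Q')$. The crux is that $\equiv$-equivalent loops are sent to $\equiv$-equivalent loops. I would reduce this to a comparison of cluster sets: the path of $\nu\mu\overleftarrow{\nu}$ runs along $\overleftarrow{\nu}$ from $(Y,Q')$ to $(X,Q)$, then around $\mu$, then back along $\nu$, so its non-cancelled cluster variables are built from the fixed contribution of the outer legs $\nu,\overleftarrow{\nu}$ together with $\mu_{c}$. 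If $\mu\equiv\mu'$, i.e. $\mu_{c}=\mu'_{c}$, then Proposition 3.5 guarantees $\{\mu\}=\{\mu'\}$ and $[\mu]=[\mu']$, so the two conjugate paths visit the same seeds and use the same mutations; the delicate point, which I would treat carefully, is that the cancellations at the two junctions where the outer legs meet the loop are governed only by this $\equiv$-invariant local data, whence $(\nu\mu\overleftarrow{\nu})_{c}=(\nu\mu'\overleftarrow{\nu})_{c}$ and $\nu\mu\overleftarrow{\nu}\equiv\nu\mu'\overleftarrow{\nu}$. Since the reduction process is defined intrinsically through cluster order and through the rooted-loop decomposition of Lemma 3.6(3)(a), and is applied automatically to products by Remark 3.9(1), preservation of cluster sets up to the fixed $\nu$-contribution forces reduced loops to reduced loops, and $\phi$ descends to an isomorphism $\mathcal{M}(Q)\cong\mathcal{M}(Q')$.

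Finally I would record that the construction is canonical: if $\nu_1,\nu_2$ both carry $(X,Q)$ to $(Y,Q')$ then $\overleftarrow{\nu_2}\nu_1$ is a rooted loop at $(X,Q)$, and since $\mathcal{M}(Q)$ is abelian by Remark 3.9(2) the induced inner automorphism is trivial, so $\phi$ is independent of the chosen connecting sequence. This establishes that $\mathcal{M}(Q)$ depends, up to canonical isomorphism, only on the cluster algebra $\mathcal{A}(Q)$ and not on the initial seed.
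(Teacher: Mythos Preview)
Your proposal is correct and takes essentially the same approach as the paper: the paper's proof likewise picks a connecting mutation sequence $\mu$ with $(Y,Q')=\mu((X,Q))$ and defines the change-of-basepoint isomorphism $\psi(\mu')=\mu\,\mu'\,\overleftarrow{\mu}$, then declares ``the rest of the proof is straightforward.'' You have simply filled in the details the paper omits (homomorphism, bijectivity, compatibility with $\equiv$ and the reduction process, independence of the connecting path via abelianness), so the two arguments coincide in substance.
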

\begin{proof} Let $(Y, Q') = \mu((X,Q))$ for some mutation sequence $\mu$, where $\mathcal{M}(Q)$ is the rooted mutation group of the seed $(Y,Q')$. Define $\psi: \mathcal{M}(Q) \rightarrow \mathcal{M}(Q')$ by $\psi(\mu') = \mu \mu' \overleftarrow{\mu}$. The rest of the proof is straightforward.
\end{proof}

\begin{exam}  Let $Q$ be the quiver $\cdot_{1} \longrightarrow \cdot_{2}$. Then $\mathcal{M}(Q)=\{1,\mu_{[12]}\}=\{1,\mu_{[21]}\}$.
               
\end{exam}

\begin{thm} The following are equivalent
\begin{enumerate}
  \item The cluster class $[(X,Q)]$ is finite;
    \item Each path in $\mathbb{\widetilde{G}}(Q)$ (respect to every element in $\mathcal{M}(Q)$)  is of finite  order.
    \item $\mathcal{M}(Q)$ is a finite group and $\mathcal{M}/\mathcal{M}(Q)$ is a finite set;
\end{enumerate}

\end{thm}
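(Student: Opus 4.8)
The plan is to route everything through the action of the global mutation group $\mathcal{M}$ on the cluster structure $[(X,Q)]$, proving the two equivalences $(1)\Leftrightarrow(3)$ and $(1)\Leftrightarrow(2)$ separately, which together give the three-way equivalence. First I would record the group-action facts that make the argument run. The assignment $g\cdot s := g(s)$ defines an action of $\mathcal{M}$ on $[(X,Q)]$: it is well defined because, by the very definition of a global mutation loop, the relations we impose on $M$ are precisely the sequences fixing every seed, so the action of $M$ descends to $\mathcal{M}$ and is \emph{faithful}. It is transitive since every seed of $[(X,Q)]$ is by construction of the form $\mu(X,Q)$, and the stabilizer of the base seed $(X,Q)$ is exactly the set of rooted mutation loops, which after the reduction process of Definition 3.7 generates $\mathcal{M}(Q)$. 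Hence $\mathcal{M}(Q)$ is the stabilizer of $(X,Q)$, and the orbit–stabilizer correspondence yields a bijection $[(X,Q)]\longleftrightarrow \mathcal{M}/\mathcal{M}(Q)$.

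For $(1)\Leftrightarrow(3)$ I would exploit this bijection. If $[(X,Q)]$ is finite, then $\mathcal{M}$ acts faithfully on a finite set, so it embeds into a finite symmetric group and is itself finite; consequently its subgroup $\mathcal{M}(Q)$ is finite and the index $[\mathcal{M}:\mathcal{M}(Q)]=|\mathcal{M}/\mathcal{M}(Q)|$ is finite, giving $(3)$. Conversely, if $\mathcal{M}/\mathcal{M}(Q)$ is finite then by the bijection above $[(X,Q)]$ is finite; so $(3)\Rightarrow(1)$ needs only the coset condition, while the finiteness of $\mathcal{M}(Q)$ is extra information consistent with $(1)$.

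For $(1)\Rightarrow(2)$ I would invoke Lemma 3.6: finiteness of $[(X,Q)]$ forces finitely many distinct cluster variables, so the cluster set along any path stabilizes after finitely many steps, i.e. every path in $\mathbb{\widetilde{G}}(Q)$ has finite order. The reverse direction $(2)\Rightarrow(1)$ is the step I expect to be the main obstacle, and I would argue by contraposition. If $[(X,Q)]$ is infinite, then $\mathbb{G}(Q)$ is an infinite, connected, $n$-regular (hence locally finite) digraph, so by König's lemma it contains an infinite simple ray issuing from $(X,Q)$. Since finitely many cluster variables can assemble into only finitely many clusters, a ray visiting infinitely many distinct seeds must produce infinitely many distinct cluster variables along it; the corresponding path therefore has infinite cluster order. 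As such a ray never returns to $(X,Q)$, it contains no rooted mutation loop to delete and is not of finite cluster order, so it persists in the reduced digraph $\mathbb{\widetilde{G}}(Q)$, contradicting $(2)$.

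Two points will require care. First, that the reduction process does not collapse the witnessing ray: this is where I must verify that reduction acts only on finite-order paths and on repeated rooted loops, neither of which applies to a simple ray of infinite cluster order. Second, that the stabilizer of $(X,Q)$ really is generated by the reduced rooted loops, so that $\mathcal{M}(Q)$ equals the stabilizer and the orbit–stabilizer bijection is exactly $\mathcal{M}/\mathcal{M}(Q)$; this holds because the reduction only identifies $\equiv$-equivalent loops and truncates finite-order tails, operations changing neither the subgroup generated nor its action on seeds. With these verified, $(1)$, $(2)$ and $(3)$ are equivalent.
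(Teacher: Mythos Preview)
Your route differs from the paper's: the paper proves the cycle $(1)\Rightarrow(2)\Rightarrow(3)\Rightarrow(1)$ by direct path analysis, while you split off $(1)\Leftrightarrow(3)$ via orbit--stabilizer and handle $(1)\Leftrightarrow(2)$ separately, using K\"onig's lemma for the contrapositive of $(2)\Rightarrow(1)$. The K\"onig-lemma step is a genuine improvement over the paper's more roundabout $(2)\Rightarrow(1)$ argument and is correct as stated: an infinite simple ray from $(X,Q)$ has infinite cluster set and contains no rooted loops, so neither step of the reduction in Definition~3.7 acts on it, and it survives in $\widetilde{\mathbb{G}}(Q)$.

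The gap is in your identification $\mathcal{M}(Q)=\mathrm{Stab}_{\mathcal{M}}(X,Q)$, on which your bijection $[(X,Q)]\leftrightarrow\mathcal{M}/\mathcal{M}(Q)$ and hence your $(3)\Rightarrow(1)$ depend. The paper's $\mathcal{M}(Q)$ is not the raw stabilizer: by Lemma~3.6(2) and Remark~3.9(2) it is \emph{commutative}, because the reduction process identifies $\equiv$-equivalent loops (loops with the same cluster set $\mu_c$). Two rooted loops with equal cluster sets can nonetheless permute the remaining seeds of $[(X,Q)]$ differently and thus be distinct elements of $\mathcal{M}$; Proposition~3.5 only gives $[\mu]=[\mu']$, not $\mu=\mu'$ in $\mathcal{M}$. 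So your claim that reduction ``chang[es] neither the subgroup generated nor its action on seeds'' is not justified: imposing $\equiv$ is a genuine quotient, and $\mathcal{M}(Q)$ need not embed in $\mathcal{M}$ as the stabilizer. Your $(1)\Rightarrow(3)$ still goes through (a finite $\mathcal{M}$ forces any group assembled from its loops, and any coset set, to be finite), but for $(3)\Rightarrow(1)$ you should instead argue as the paper does: if some reduced path had infinite order it would, after Lemma~3.6(3)(a), decompose into infinitely many pairwise non-$\equiv$-equivalent rooted loops or infinitely many coset representatives, contradicting~(3); then feed this into your K\"onig-lemma contrapositive to reach~(1).
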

\begin{proof}
\begin{enumerate}
  \item $(1) \Rightarrow (2)$.  Let $[(X,Q)]$ be a finite set. Then the number of cluster variables is  finite. Hence, cluster set, $\mu_{c}$, in each path $P_{\mu}$ is also finite. Therefore,  each path $P_{\mu}$ must stop producing new cluster variables at certain point, which means $P_{\mu}$ must have a finite cluster rank.

   \item $(2) \Rightarrow (3)$. Assume that  every path in $\mathbb{\widetilde{G}}(Q)$ is of finite cluster rank. Then, after applying the  second step of the reduction process, all paths in  $\overline{\mathbb{G}}(Q)$ will be of finite length. Hence, each composition of rooted mutation loops must be also finite.   Therefore, the set of elements of $\mathcal{M}(Q)$ is a finite set.
       
      Part 3 (a) of Lemma 3.6, guarantees that, each element $\mu$ of $\mathcal{M}$ can be written of the form $\mu=\mu^{k+1}\mu^{(k)}\cdots\mu^{(1)}$ where $\mu^{(j)}, j\in [1, k]$ are rooted mutation loops for some $k\geq 0$ where $\mu^{k+1}$ is a subsequence of some rooted mutations loop. Step 1 of the reduction process guarantees that  each subsequence of mutations is of finite length, i.e,. in the case of $\mu$, we have $\mu^{k+1}$ is of finite rank which means it will be identified with its longest productive sub sequence.  Then each path $P_{\mu}$ must stop producing new cluster variables at certain point. Therefore the set $\mathcal{M}/\mathcal{M}(Q)$ is also finite.

  \item $(3) \Rightarrow (1)$. First, we will prove that $(3) \Rightarrow (2)$, and then we will show that $(2) \Rightarrow (1)$. Assume that $\mathcal{M}(Q)$ is a finite group and $\mathcal{M}/\mathcal{M}(Q)$ is a finite set.  The reduction process eliminates all redundant subpaths in $\mathbb{\widetilde{G}}(Q)$, preserving only the productive ones, i.e., subpaths containing cluster variables that have not previously appeared in any other subpath. Consequently, the order of each path is proportional to the cardinality of its cluster set. Now, suppose $\mathbb{\widetilde{G}}(Q)$ contains a path of infinite order, denoted by $P_{\mu}$. After applying the reduction process to the corresponding mutation sequence $\mu$, the remaining mutation sequence will still has infinite length. Therefore, $\mu$ is either composed of an infinite number of non-identical rooted mutation loops or is a product of an infinite number of sequences of mutations, which are elements of the quotient $\mathcal{M}/\mathcal{M}(Q)$. This implies that either $\mathcal{M}(Q)$ is an infinite group, or $\mathcal{M}/\mathcal{M}(Q)$ is an infinite set contradicting the assumption that  $\mathcal{M}(Q)$ is a finite group and $\mathcal{M}/\mathcal{M}(Q)$ is a finite set.
      
      Now, suppose that each path (with respect to every element in $\mathcal{M}(Q)$) in $\mathbb{\widetilde{G}}(Q)$ is of finite order. Then, by definition, the cluster set of each path must be finite, and consequently, so are the cluster sets of each sequence of mutations. Furthermore, the number of possible paths in $\mathbb{\widetilde{G}}(Q)$ is finite, as it is an $n$-regular graph with a finite number of edges between any two vertices and no paths of infinite order. Therefore, both the cluster sets of paths and the number of possible paths are finite, leading to a finite set of cluster variables formed by the union of all cluster sets. This implies that the cluster class $[(X,Q)]$ is finite.

\end{enumerate}
\end{proof}

We recall the following concepts and notations before proceeding with the lemma. For a quiver$Q$, $[Q]$ denotes the mutation class of $Q$, consisting of all quivers that can be generated from $Q$ by applying every possible sequence of mutations. The weight of  $[Q]$, denoted by $w[Q]$, is defined as the maximum of the weights of the quivers in $[Q]$. Additionally, $\mathcal{X}_{Q}$ represents the set of all cluster variables of a seed of the form $(X,Q)$. 

\begin{lem} [15, Lemma 3.11] Let $Q$ be a  quiver of  finite mutation type. Then  $w[Q]=4$  if and only if one of the following cases, depending  on the rank of $Q$, is satisfied
   \begin{enumerate}
     \item [(i)] If $rk(Q)=3$  then  $[Q]$ contains  one of the quivers
      \begin{equation}
  \nonumber Q^{x}_{3(1)}=\xymatrix{
\cdot_{t} \ar[d]_{(x, 1)} & \ar[l]_{(1, x)}\cdot_{j} \\
	\cdot_{k}  \ar[ur]_{(2. 2)}}, \ \ \ \ \ \ \ \
Q_{3(2)}= \xymatrix{
\cdot_{t} \ar[d]_{(2, 2)} & \ar[l]_{(2, 2)}\cdot_{j} \\
	\cdot_{k}  \ar[ur]_{(2. 2)}}  \ \ \text{or} \ \ \ \
\ Q_{3(3)}= \xymatrix{
\cdot_{t} \ar[d]_{(2, 1)} & \ar[l]_{(2, 1)}\cdot_{j} \\
	\cdot_{k}  \ar[ur]_{(1. 4)}}
 \end{equation}
 where $x=1, 2, 3$ or $4$.
     \item [(ii)] If $rk(Q)>3$ then $[Q]$ must satisfy the following criteria: Every quiver $Q'\in [Q]$  of weight 4  satisfies the following

       \begin{enumerate}
         \item [A.] Edges of weight  4 in $Q'$  appear in  a cyclic subquiver which is symmetric to  one of the following quivers

 \begin{equation}
\nonumber (a)  \  \   \ Q_{a, x}: \  \xymatrix{
\cdot_{v} \ar[d]_{(x, 1)} & \ar[l]_{(1, x)}\cdot_{j} \\
	\cdot_{k}  \ar[ur]_{(2. 2)}} \  \  \  \ \ \ \  \
 (b)  \  \  \ Q_{a}: \  \xymatrix{
\cdot_{v} \ar[d]_{(1, 2)} & \ar[l]_{(1, 2)}\cdot_{j} \\
	\cdot_{k}  \ar[ur]_{(4. 1)}} \  \  \  \
 \end{equation}
 \begin{equation}\label{}
 (c) \  Q_{c, t}: \  \xymatrix{\cdot_{v}\ar[dr]^{(t, 1)}\\
 \cdot_{k}  \ar[u]^{(1, t)}\ar[dr]_{(1, 2)}& \cdot_{j}\ar[l]_{(2, 2)} \\
 & \ar[u]_{(2, 1)}\cdot_{l} }\\ \  \  \ \  \  \
 (d)  \  Q_{d}:  \  \  \  \ \  \ \xymatrix{\cdot_{v}\ar[dr]\\
 \cdot_{i}  \ar[u]\ar[dr]_{(1, 3)}& \cdot_{j}\ar[l]_{(2, 2)} \\
& \ar[u]_{(3, 1)}\cdot_{l} } \\
 \end{equation}

where $x=1, 2, 3$ or $4$  and  $t=1$ or $ 2$, such that edges of weight 4 are not connected outside their cycles. And any subquiver of the form $ \xymatrix{ \cdot \ar@{-}[r]^{z} &\cdot \ar@{-}[r]^{z} &\cdot}, z=2$ or  $3$  appears in a quiver that is mutationally equivalent to one of the forms in (3.1).
 \item [B.]  $Q'$ will have more than one edge of weight 4  if it  is symmetric to  one of the following cases

      \begin{itemize}
        \item $Q'$ is formed from two or three copies of $Q_{a,1}$  by coherently connecting them at $v$ such as in $X_{6}$ and $X_{7}$.  Or $Q$ is formed from $Q_{a,2}$ and/or $Q_{a}$  in the following form

             \begin{equation}\label{}
              \  \xymatrix{
\cdot_{j'} \ar[r]^{2} &\cdot_{v}\ar[dl]^{2} \ar[dr]_{2} & \ar[l]_{2}\cdot_{j} \\
\cdot_{k'}\ar[u]^{4}&&\cdot_{k}  \ar[u]_{4}}
            \end{equation}

        \item $Q'$ is formed from $Q_{a,1}$, $Q_{a,2}$ and/or $Q_{a}$  in one of the following forms

       \begin{enumerate}
\item [(a)]
            \begin{equation}\label{}
             \  \xymatrix{
\cdot_{j'}\ar[r]^{2}&\cdot_{v'} \ar[d]^{2}\ar@{-}[r] &\cdots\ar@{-}[r]&\cdot_{v} \ar[d]_{2} & \ar[l]_{2}\cdot_{j} \\
&\cdot_{k'}\ar[ul]^{4}&&\cdot_{k}  \ar[ur]_{4}},
            \end{equation}
           where the subquiver connecting $v$ and $v'$ is of $A$-type. Or $Q$ is the quiver
 \item [(b)]

            \begin{equation}\label{}
             \  \xymatrix{
\cdot_{j'}\ar[r]&\cdot_{v'} \ar[d]\cdot\ar@{-}[r]&\cdot_{v} \ar[d] & \ar[l]\cdot_{j} \\
&\cdot_{k'}\ar[ul]^{4}&\cdot_{k}  \ar[ur]_{4}}
            \end{equation}
        \end{enumerate}
      \end{itemize}
 Any additional subquiver of $Q'$ attached to a vertex in the quivers $Q_{a,x}, Q_{a},  Q_{b}$ or $Q_{c,t}$ will be referred to as a `` tail" and we will refer to $Q_{a,x}, Q_{a}, Q_{b}$ or $Q_{c,t}$ as a ``head".

     \item [C.] Tails of a weight-4 quiver $Q'\in [Q]$ satisfy the following
     \begin{itemize}
       \item If $Q'$ is one of the quivers $Q_{a,x}, x=1, 2, 3$, $Q_{a}$ or $Q_{c,1}$ in (3.3)  then $Q'$ could have one simply-laced tail attached at the vertex $v$.

       \item If $Q'$ is one of the quivers in (3.2), (3.3) or (3.4)  then it does not have any tails.
     \end{itemize}

       \end{enumerate}

\end{enumerate}

\end{lem}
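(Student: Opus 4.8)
The plan is to reduce the statement to the classification of finite mutation type valued quivers (Felikson--Shapiro--Tumarkin) and then to track how edge weights evolve under the mutation rules recorded in Remarks 2.3. Throughout, a weight-$4$ edge is one with valuation $(2,2)$, $(1,4)$ or $(4,1)$, and the assertion concerns the local shape of any quiver $Q'\in[Q]$ realizing the maximal weight $w[Q]=4$. The ``if'' direction I would dispatch by a finite verification: for each head $Q_{a,x}$, $Q_a$, $Q_b$, $Q_{c,t}$, $Q_d$ and for each amalgamated configuration of condition B, apply $\mu_k$ at every vertex $k$ and use the three-case rule of Remark 2.3(1)(c) to confirm that no resulting edge exceeds weight $4$. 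Since $Q$ is of finite mutation type by hypothesis, finitely many such checks certify $w[Q]=4$.

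The substance is the ``only if'' direction. Fix $Q'\in[Q]$ containing a weight-$4$ edge $e=(i,j)$. The first step is a local rigidity statement: a weight-$4$ edge cannot be joined to the rest of the quiver except inside a short oriented cycle. Concretely, if a vertex $k$ is adjacent to $i$ while the triangle on $\{i,j,k\}$ is not closed, or is closed with incompatible valuations, then mutating at $i$ or at $j$ and applying Remark 2.3(1)(b)--(c) yields an edge of weight $>4$, contradicting $w[Q]=4$; in the remaining patterns the configuration is forced mutation-infinite, contradicting finite mutation type. Running this over the three possible valuations of $e$ and over the adjacency patterns of a third (and fourth) neighbour isolates precisely the oriented $3$-cycles $Q_{a,x},Q_a,Q_b$ and the four-vertex blocks $Q_{c,t},Q_d$ as the only admissible heads, and shows weight-$4$ edges are not connected outside their cycles. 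This is condition A, and the companion claim about subquivers $\cdot \mathrel{-}^{z}\cdot\mathrel{-}^{z}\cdot$ with $z=2,3$ follows from the same propagation argument.

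For $rk(Q)=3$ the head is the entire quiver, so the head classification delivers the list $Q^x_{3(1)},Q_{3(2)},Q_{3(3)}$ of case (i) directly. For $rk(Q)>3$ I would then determine how several heads may coexist (condition B) and what may be glued to a single head (condition C). For B, I would argue that two or more simultaneous weight-$4$ edges can arise only by coherently amalgamating copies of $Q_{a,1},Q_{a,2},Q_a$ at a shared vertex, invoking the exceptional finite mutation type quivers $X_6,X_7$ (and their skew-symmetrizable analogues) from the classification as the only such configurations, and checking that every other amalgamation breaks finiteness. For C, the same mutation-propagation argument applied to one extra vertex shows that a non-simply-laced tail, or any tail attached away from the distinguished vertex $v$, can be mutated into a weight-$4$ (or heavier) edge leaving the cycle, which is excluded; this pins down the tail constraints.

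The step I expect to be the main obstacle is the $rk(Q)>3$ part of the forward direction, namely controlling the amalgamation of heads (B) and the admissibility of tails (C) \emph{simultaneously} while preserving finite mutation type: a tail that is legal on an isolated head can become an illegal external connection once a second head is present. The cleanest route is therefore to separate the surface/orbifold-type quivers, where the two constraints are read off from the triangulated orbifold model, from the finitely many exceptional quivers $X_6$, $X_7$ and their valued analogues, treating the latter by direct computation.
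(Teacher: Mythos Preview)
The paper does not prove this lemma at all: it is stated with an explicit citation to \textnormal{[15, Lemma~3.11]} and is used as a black box in the proof of Theorem~3.15. There is therefore no ``paper's own proof'' to compare your proposal against.

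That said, your strategy is the natural one and is broadly in line with how such classification lemmas are established in the literature (including, presumably, in \textnormal{[15]}): reduce to the Felikson--Shapiro--Tumarkin classification of mutation-finite skew-symmetrizable matrices, and then run a mutation-propagation argument to pin down the local shape around a weight-$4$ edge. Two comments on the sketch itself. First, your ``if'' direction is slightly underspecified: checking one round of mutations at each vertex of a head does not by itself bound $w[Q]$; you really do need the finiteness of $[Q]$ (which you invoke) together with a verification across the whole class, or else an inductive invariant showing that no mutation ever creates weight $>4$ from the listed configurations. Second, in case~(ii) the hypothesis is phrased as a condition on \emph{every} weight-$4$ quiver in $[Q]$, which is vacuously satisfied when $[Q]$ contains none; your write-up should make explicit that the existence of at least one weight-$4$ quiver is part of what is being assumed in the ``if'' direction, otherwise the biconditional fails as stated. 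Neither of these is a genuine obstruction to your plan, but both should be addressed in a full proof.
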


\begin{rem} If $\psi: \mathcal{M}(Q)\longrightarrow\mathcal{M}(Q')$ is a group isomorphism, then $\psi$ induces an isomorphism $\phi: \mathcal{M}(Q)\longrightarrow\mathcal{M}(Q')$ that is restricted on $[Q]$. That is if there is a permutation $\sigma$ such that $\mu(Q)=\sigma (\mu'(Q'))$ then $\phi(\mu(Q))=\sigma (\phi(\mu'(Q')))$.
\end{rem}

\begin{thm} Two cluster finite type algebras $\mathcal{A}(X,Q)$ and  $\mathcal{A}(X',Q')$ are isomorphic as cluster algebras if and only if there is a group isomorphism $\psi: \mathcal{M}\rightarrow \mathcal{M}'$ such that the restriction of $\psi$ satisfies the following conditions 
\begin{itemize}
  \item the  rooted mutations groups $\mathcal{M}(Q)$ and $\mathcal{M}(Q')$ are isomorphic;
  \item the two sets $\mathcal{M}/\mathcal{M}(Q)$ and $\mathcal{M'}/\mathcal{M}(Q')$ are in one-to-one correspondence. 
\end{itemize}
  
\end{thm}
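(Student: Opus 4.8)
The plan is to prove both implications through the single identification of the coset set $\mathcal{M}/\mathcal{M}(Q)$ with the cluster structure $[(X,Q)]$. First I would make this precise: the global mutation group $\mathcal{M}$ acts on $[(X,Q)]$ by $\mu\cdot s=\mu(s)$, this action is well defined because global mutation loops fix every seed, and it is transitive since every seed is reached from $(X,Q)$. By definition the rooted mutation group $\mathcal{M}(Q)$ is exactly the stabilizer of $(X,Q)$ in $\mathcal{M}$. Since $\mathcal{M}(Q)$ is commutative (Remarks 3.9) and independent of the base seed (Proposition 3.10), all point stabilizers coincide, so $\mathcal{M}(Q)$ is normal and $\mathcal{M}/\mathcal{M}(Q)$ is in canonical bijection with $[(X,Q)]$, the coset $\mu\,\mathcal{M}(Q)$ corresponding to the seed $\mu(X,Q)$. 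That this correspondence is well defined rests on Lemma 3.6(3a), which decomposes each mutation sequence into a product of rooted loops followed by one residual productive subsequence, and on the finiteness supplied by Theorem 3.12.

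For the forward implication, suppose $f\colon\mathcal{A}(X,Q)\to\mathcal{A}(X',Q')$ is a cluster algebra isomorphism. Such a map carries clusters to clusters and commutes with seed mutation, so it induces an isomorphism of rooted cluster digraphs $\mathbb{G}(Q)\cong\mathbb{G}(Q')$ respecting mutation labels up to the relabelling permutation $\sigma$ furnished by the result of [14] (so $Q'=\sigma(Q)$). Because the reduction process of Definition 3.7 is phrased purely through cluster sets, which $f$ preserves, this descends to an isomorphism $\widetilde{\mathbb{G}}(Q)\cong\widetilde{\mathbb{G}}(Q')$. Reading that digraph isomorphism on paths gives a bijection of mutation sequences compatible with concatenation, hence a group isomorphism $\psi\colon\mathcal{M}\to\mathcal{M}'$; loops at the base seed go to loops, so $\psi$ restricts to $\mathcal{M}(Q)\cong\mathcal{M}(Q')$, and the induced map on base-seed orbits is the required bijection $\mathcal{M}/\mathcal{M}(Q)\leftrightarrow\mathcal{M}'/\mathcal{M}(Q')$.

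For the converse I would reverse this construction. Given $\psi$ with the two listed properties, the coset bijection transports through the identification above to a bijection $\Phi\colon[(X,Q)]\to[(X',Q')]$ of seeds, and compatibility of $\psi$ with the group multiplication forces $\Phi$ to intertwine the two mutation actions, namely mutating a seed and recording its coset matches, under $\psi$, the analogous operation on the primed side. Applying Remark 3.14, the restriction $\mathcal{M}(Q)\cong\mathcal{M}(Q')$ induces a compatible map on the quiver mutation class $[Q]$, so the valued quivers carried along by $\Phi$ agree up to a fixed permutation $\sigma$; in particular $Q'=\sigma(Q)$, i.e. $Q$ and $Q'$ are symmetric. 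Invoking the equivalence of [14] (or, for finite type, the Cartan--Killing classification of Theorem 1.4 in [9]), this symmetry produces the cluster algebra isomorphism $\mathcal{A}(X,Q)\cong\mathcal{A}(X',Q')$.

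The main obstacle I anticipate is the converse, specifically showing that the purely group-theoretic data recovers the valued quiver at each seed and not merely the underlying exchange graph: two finite type algebras can share the number of clusters while having distinct Cartan--Killing type, so $\Phi$ together with the abstract isomorphism $\mathcal{M}(Q)\cong\mathcal{M}(Q')$ must be shown rigid enough to reproduce the valued quiver mutations. This is exactly what Remark 3.14 is designed to supply, and the delicate point is verifying that the induced action on $[Q]$ is consistent across \emph{all} seeds rather than only at the base seed; the finiteness from Theorem 3.12 is what guarantees termination of the reduction process and lets these compatibilities be checked on a finite diagram.
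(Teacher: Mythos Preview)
Your forward direction is essentially the paper's: both invoke the result of [14] to obtain a permutation $\sigma$ with $(X',Q')=\pm\sigma(X,Q)$ and then transport mutation sequences along $\sigma$. The paper verifies directly that $\psi(\mu_i)=\mu_{\sigma(i)}$ preserves the equivalence $\equiv$ on cluster sets, while you phrase the same content as an isomorphism of reduced digraphs; these are interchangeable.

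The converse, however, has a genuine gap. Your argument hinges on Remark 3.14 to pass from the abstract isomorphism $\mathcal{M}(Q)\cong\mathcal{M}(Q')$ to the conclusion $Q'=\sigma(Q)$, but that remark does not produce the permutation $\sigma$; it is a conditional statement (\emph{if} some $\mu(Q)$ and $\mu'(Q')$ are already related by a permutation, \emph{then} $\phi$ respects it). So you are assuming exactly what must be proved. You yourself flag the danger --- two finite-type algebras can have equinumerous cluster structures yet different Cartan--Killing type --- and the coset bijection $\Phi$ plus an abstract group isomorphism do not by themselves exclude this. A secondary issue: Proposition 3.10 gives only that stabilizers at different seeds are \emph{conjugate} (via $\mu'\mapsto\mu\mu'\overleftarrow{\mu}$), not equal, so your normality claim for $\mathcal{M}(Q)$ is unjustified; fortunately the theorem only needs $\mathcal{M}/\mathcal{M}(Q)$ as a set, so this is not fatal.

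The paper's converse is substantially harder work than what you outline. It first shows (Case~1, Step~1) that each $\psi(\mu_i)$, being an involution, must be a single mutation or a product of pairwise non-adjacent single mutations; then (Step~2) it uses the relation $\mu_{[ij]}\equiv\mu_{[ji]}$, which holds precisely for simply-laced edges, to force $\psi$ to preserve adjacency and hence induce a graph isomorphism on the underlying graphs; Step~3 checks orientations. For non-simply-laced $Q$ the paper runs a case analysis on $w([Q])\in\{2,3,4\}$, invoking the structural Lemma~3.13 to pin down where weight-$2$ and weight-$4$ edges can sit and arguing that the specific relations present in $\mathcal{M}(Q)$ (e.g.\ whether $\mu_{[n-1\,n]}=\mu_{[n\,n-1]}$) detect those weights on the $Q'$ side. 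This edge-weight detection from group relations is the missing idea in your sketch; without it the step ``the valued quivers carried along by $\Phi$ agree up to a fixed permutation'' does not follow.
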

\begin{proof}$``\Rightarrow". $ Assume that $\mathcal{A}(X,Q)$ and  $\mathcal{A}(X',Q')$ are isomorphic cluster algebras. Then, there is a seed in the cluster class $[(X',Q')]$ that is symmetric to $(X,Q)$, thanks to  Theorem 3.14 in [14]. Without loss of generality, assume that this seed is $(X',Q')$ itself. Then there is a permutation $\sigma$ such that $(X',Q')=\pm \sigma(X,Q)$. Define $\psi: \mathcal{M}\rightarrow\mathcal{M}$ given by $\mu_{i}\mapsto \mu_{\sigma(i)}, i\in [1, n]$.
\begin{itemize}
  \item First, we will show that $\mathcal{M}(Q)$ and $\mathcal{M}(Q')$ are isomorphic.  Again thanks to  Theorem 3.13 in [14], guarantees that if $\mu$ is a rooted mutation loop for $(X,Q)$ then $\psi(\mu)$ is a rooted mutation loop for $(Y,Q')$. 
      
      Now, we will show that $\psi$ is an isomorphism of groups. Let $\mu^{(1)}=\mu_{i_{k}}\cdots \mu_{i_{1}}, $ and $\mu^{(2)}=\mu_{l_{k'}}\cdots \mu_{l_{1}}, $ where $k, k' \leq n$, and assume that $\mu^{(1)} \equiv \mu^{(2)}$ in $\mathcal{M}(Q)$. Then $\mu^{(1)}(X, Q)=\mu^{(2)}(X, Q)$ and $\mu^{(1)}_{c}=\mu^{(2)}_{c}$. Since $Q'=\pm\sigma(Q)$, hence, $\sigma (\mu^{(1)}(X, Q))=\sigma(\mu^{(2)}(X, Q))$ which means $\mu^{(1)}_{\sigma}(X, Q)=\mu^{(2)}_{\sigma}(X, Q)$, thanks to Lemma 3. 12 in [14]. Now, let $y\in \sigma (\mu^{(1)})_{c}=(\mu^{(1)}_{\sigma})_{c}$. Then there is $t\in [1,k]$ such that $y=\mu_{\sigma (i_{t})}\cdots \mu_{\sigma (i_{1})}(x_{\sigma (j)})$ for some $j\in [1, n]$. Let $x=\mu_{i_{t}}\cdots \mu_{i_{1}}(x_{j})\in \mu^{(1)}_{c}=\mu^{(2)}_{c}$, where $y=\sigma (x)=\mu_{\sigma (i_{t})}\cdots \mu_{\sigma (i_{1})}(x_{\sigma (j)}) \in \mu^{(2)}_{c}$. Hence, there is $z=\mu_{l_{t'}}\cdots\mu_{l_{1}}(x_{j'}) \in \mu^{(2)}_{c}$, where $z=x$. Therefore 
\begin{equation}\label{}
 \begin{array}{ccc}
  \nonumber y &=&\sigma (\mu_{i_{t}}\cdots \mu_{i_{1}}(x_{j}) \\
    &= &\sigma(\mu_{l_{t'}}\cdots\mu_{l_{1}}(x_{j'})\\
   &=&\mu_{\sigma (l_{t'})}\cdots\mu_{\sigma (l_{1})}(\sigma (x_{j'})) \in (\mu^{(2)}_{\sigma})_{c}

\end{array}
\end{equation}

Hence $(\mu^{(1)}_{\sigma})_{c} \subseteq (\mu^{(2)}_{\sigma})_{c}$ and the other direction is similar. Therefore, $(\mu^{(1)}_{\sigma})_{c}=(\mu^{(2)}_{\sigma})_{c}$, and this finishes the proof of $\mu^{(1)}_{\sigma}\equiv \mu^{(2)}_{\sigma}$. 

 \item Secondly, we show that $\mathcal{M}/\mathcal{M}(Q)$ and $\mathcal{M'}/\mathcal{M}(Q')$ are in one-to-one correspondence. Let $\mu$ be a mutation sequence that is not a rooted mutation loop, i. e. , $\mu((X,Q)) \neq (X, Q)$. Then, we have $\sigma \mu((X, Q)) \neq \sigma ((X, Q))$. Hence $\mu_{\sigma} \sigma((X,Q)) \neq \sigma ((X,Q))$. Therefore $\psi (\mu) (Y,Q') \neq (Y, Q')$, and $\psi (\mu)$ is not a rooted mutation loop of $(Y,Q')$. Define the one-to-one correspondence between $\mathcal{M}/\mathcal{M}(Q)$ and $\mathcal{M'}/\mathcal{M}(Q')$ by sending $\mu \mapsto \psi (\mu)$. Since both $\mathcal{X}{Q}$ and $\mathcal{X}{Q'}$ have the same number of cluster variables, Lemma 3.12 in [14] guarantees that $\mu$ and $\psi(\mu)$ have the same cluster order, and the sets $\mu_{c}$ and $\psi(\mu)_{c}$ contain the same number of cluster variables. Therefore, $\psi$ defines a one-to-one correspondence between $\mathcal{M}/\mathcal{M}(Q)$ and $\mathcal{M'}/\mathcal{M}(Q')$, thanks to the fact that $\psi$ is a one-to-one map.
\end{itemize}

$``\Leftarrow". $  Assume that  $\psi: \mathcal{M}\longrightarrow\mathcal{M'}$  is a group isomorphism such that  $\psi: \mathcal{M}(Q)\longrightarrow\mathcal{M}(Q')$ is an isomorphism of rooted mutation groups. Additionally, there is a one-to-one correspondence between $\mathcal{M}/\mathcal{M}(Q)$ and $\mathcal{M'}/\mathcal{M}(Q')$. For simplicity, we will also use $\psi$
 to denote this one-to-one correspondence.  In the following steps we will show that $Q$ and $Q'$ must be symmetric quivers.

\begin{enumerate}

\item \textbf{Case 1. } If $[Q]$ contains a quiver of weight one, this case encompasses the following scenarios: when $[Q]=1$, and if the mutation class $[Q]$ contains any of the quivers $Q=X_{6}, X_{7}, Q=X^{(1,1)}_{6}, X^{(1,1)}_{7}$, $X^{(1,1)}_{8}$ or the quiver in (3.4). Without loss of generality, we will assume that the initial seed is a quiver $Q$ of weight one. In the following, we will show that $\psi$ sends $Q$ to a quiver $Q'$ that is symmetric to $Q$, i. e. , $Q'=\sigma (Q)$ for some permutation $\sigma$.

\begin{itemize}

  \item Step 1: We will show that $\psi$ sends each single mutation in $\mathcal{M}$ to either a single mutation or a sequence of mutation formed of mutual non-adjacent single mutations in $\mathcal{M}$.  
  Let $i\in [1, n]$. We have $(\psi (\mu_{i}))^2=1$. Then $\psi (\mu_{i})=\overleftarrow{\psi (\mu_{i}})$. If $\psi (\mu_{i})$ is one single mutation then nothing to prove. If $\psi (\mu_{i})$ contains two adjacent single mutations, then $(\psi (\mu_{i}))^{2}_{c}\neq 1$ which contradicts with the fact that $\mu^{2}_{i}=1$.
       
  \item Step 2: For every vertex $i \in Q_{0}$, we will show that $\psi$ maps every vertex in $Nhb. {i}$ to a vertex in $\psi(Nhb. {i})$, meaning $\psi$ defines a permutation between the vertices of $Nhb. {i}$ and $\psi(Nhb. {i})$.  
  
  We  start by showing that, if $\mu_{i}$ and $\mu_{j}$ are two adjacent single mutations in $Q$, then $\{\psi(\mu_{i})\}\cap \{\psi(\mu_{j})\} $ contains at least two adjacent vertices in $Q'$. Let $\mu^{i}=\psi(\mu_{i})$ and $\mu^{j}=\psi(\mu_{j})$ and  suppose that $\{\psi(\mu_{i})\}\cap \{\psi(\mu_{j})\} $ contains no adjacent vertices in $Q'$.  Then, we have $\psi (\mu_{[ij]})=\psi(\mu_{i})\psi(\mu_{j})\psi(\mu_{i})\psi(\mu_{j})\psi(\mu_{i})=\psi(\mu_{i})$. But $\{\psi(\mu_{i})\}\cap \{\psi(\mu_{j})\}$ contains no adjacent vertices in $Q'$, hence, we have  $\psi (\mu_{[ij]})=(\psi(\mu_{i}))^{2}(\psi(\mu_{j}))^{2}\mu^{i}=(\psi(\mu_{i})^{2})(\psi(\mu_{j})^{2})\mu^{i}=\mu^{i}$ and, similarly we have $\psi (\mu_{[ji]})=\psi(\mu_{j})=\mu^{j}$, where $\mu^{i}\neq \mu^{j}$ in $\mathcal{M}(Q')$. However,  $\mu_{[ij]}= \mu_{[ji]}$ in  $\mathcal{M}(Q)$. Therefore, for each vertex $i\in Q_{0}$, the subquiver $Nhb. _{i}$ will be transferred to a connected subquiver in $Q'$ plus, possibly, some discrete (disconnected) vertices. But since $Q$ and $Q'$ have the same rank, then in deed $\psi$ creates an isomorphism of graphs from  the underlying  graph of $Q$ to the underlying graph of $Q'$. Therefore, $\psi$ sends each vertex in $Nhb. _{i}$ to exactly one vertex in $\sigma(Nhb. _{i})$, i.e.,  there is a permutation $\sigma$ such that $\psi$ sends each vertex in $Nhb. _{i}$ to a vertex in $\sigma(Nhb. _{i})$.
   \item Step 3: We will  show that $\psi$ preserve the directions in $Q'$, i.e., if $i\longrightarrow j$ in $Q$   then we have $\sigma(i)\longrightarrow \sigma(j)$ in $Q'$.   Which is equivalent   to showing that $Nhb. _{i}$ and $Nhb. _{\sigma(i)}$ are  symmetric for every $i\in Q_{0}$.  Now, if the subquivers of  $Nhb. _{i}$ and $Nhb. _{\sigma(i)}$ are not symmetric, then there is at least one edge in  $Nhb. _{i}$ with opposite direction of its corresponding edge in  $Nhb. _{\sigma(i)}$. Without loss of generality, assume that this edge to be $\cdot_{k} \longleftarrow \cdot_{i} \longrightarrow \cdot_{j}\cdots$ in $Q$ where $\cdot_{\sigma(k)} \longleftarrow \cdot_{\sigma(i)} \longleftarrow \cdot_{\sigma(j)}\cdots$ in $Q'$. One can see that $\psi(\mu_{i}\mu_{k}\mu_{j}) \neq \psi(\mu_{\sigma (i)}\mu_{\sigma (k)}\mu_{\sigma (j)})$, since $\mu_{i}\mu_{k}\mu_{j}$ preserve $Nhb. _{i}$ in $Q$ but $\mu_{\sigma (k)}\mu_{\sigma (j)}$ does not preserve $Nhb. _{\sigma (i)}$ in $Q'$. Therefore, $Q$ and $Q'$ are symmetric which means $\mathcal{A}(Q)$ and $\mathcal{A}(Q')$ are isomorphic. Which finishes the proof for any mutation class  $[Q]$ that contains a simply-laced quiver. 
   
\end{itemize}

\item \textbf{Case 2.} Assume $w([Q])=2. $  If $w([Q])=2$, then we can assume that $Q$ is a quiver with one single edge of weight 2 and the rest of the quiver is a simply-laced subquiver, thanks to Lemma 3.13. Without loss of generality assume that the edge of weight 2 is a leaf $\cdot_{n-1}\longrightarrow \cdot_{n}$.  Then the cluster class $[(X',Q')]$ is also finite, thanks to Theorem 3.12. Hence, $w([Q'])$ must be less than or equals to three. But, since $rk(Q')>2$, therefore $w([Q'])$ can not be three as if $w([Q'])=3$ then $rk(Q')=2$ otherwise $[Q']$ will not be finite. Now, suppose that $w([Q'])=1$. Since all relations in $\mathcal{M}(Q)$ will transfer to $\mathcal{M}(Q')$ and vise versa, then the relation $\mu_{[n-1n]}=\mu_{[nn-1]}$, in $\mathcal{M}(Q')$, transfers to $\mathcal{M}(Q)$ which is impossible as permutation $(n n-1)$ does not belong to $\mathcal{M}(Q)$. Therefore $w([Q'])$ must be two, which means $Q'$ can be obtained from $Q$ by applying some sequence of mutations, i.e., $Q'\in [Q]$, again thanks to Lemma 3.13. Which finishes the proof of this case. 
    
    \item \textbf{Case 3.} 
Assuming that $w([Q])=3$, this case is straightforward because in such a scenario, $[Q]$ contains only one seed of rank 2 and weight 3. Since $\mathcal{A}(Q')$ is of finite type and has the same rank as $Q$ (which is 2) and the same weight of 3, then $(X, Q)$ and $(X',Q')$ must be identical seeds. Therefore, $\mathcal{A}(X, Q)$ and $\mathcal{A}(X',Q')$ are indeed identical cluster algebras.
\item \textbf{Case 4. } Let $w([Q])=4$ and  $\mathcal{A}(X, Q)$ be a finite cluster algebra. One can see both of $[Q]$ and $[Q']$ are of the same weights.  The proof of this case will be divided into several steps based on the rank of $Q$.
    \begin{enumerate}
      \item $rk(Q)=3$. We have several cases as follows 
      \begin{enumerate}
        \item Let $Q=Q^{1}_{3(1)}$. Since $\mathcal{M}(Q)$ contains $\mu_{[tj]}$ and $\mu_{[tk]}$ then $\mathcal{M}(Q')$ must contain them. Hence $Q'$ must have two edges of weight 1. Which means $Q$ and $Q'$ are symmetric. Therefore $\mathcal{A}(X, Q)$ and  $\mathcal{A}(X', Q')$ are isomorphic  cluster algebras. 
            
        \item Let $Q=Q^{2}_{3(1)}$. Since $\mathcal{M}(Q')$ can not be $\mathcal{M}(Q^{1}_{3(1)})$ or $\mathcal{M}(Q_{3(2)})$ because each one of them contains relations that are not in $\mathcal{M}(Q^{1}_{3(1)})$. Therefore there are only two possibilities for $\mathcal{M}(Q')$ which are $\mathcal{M}(Q_{3(3)})$ or $\mathcal{M}(Q^{2}_{3(1)})$. But in deed $Q_{3(3)}$ and $Q^{1}_{3(1)}$ are symmetric and hence $\mathcal{A}(Q_{3(3)})$ and $\mathcal{A}(Q^{1}_{3(1)})$ are isomorphic cluster algebras.
            
        \item Let $Q=Q_{3(2)}$. This is an obvious case as $\mathcal{M}(Q)$ has no relations so non of the other quivers can associate to a mutation group that are isomorphic to it. Recall that $Q'$ is of the same rank and weight of $Q$, hence $Q'$ must be symmetric  or equal to $Q$. Therefore $\mathcal{A}(Q)$ and $\mathcal{A}(Q')$ are isomorphic as cluster algebras. 
            
         \end{enumerate}   
        \item If $rank (Q)>3$.
         
         \begin{enumerate}
           \item The  arguments of the cases when $Q$ contains any of $Q_{a, x}$ or $Q_{a}$  are pretty similar to the cases $Q^{x}_{3(1)}$ and $Q_{3(3)}$. Because in such case $Q$ would be formed of either $Q^{x}_{3(1)}$ or $Q_{3(3)}$ attached to a simply-laced subquiver.
           \item If $Q$ contains $Q_{c, t}, t=1$ or $2$ as a subquiver. Let $t=1$. Since $Q'$ has the same rank and weight as $Q$, and also it must carry the same number and similar positions of simply-laced edges of $Q$. Then $Q'$ must be symmetric to $Q_{c, 1}$ or $Q_{d}$. But $A(Q_{d})$ is actually isomorphic as cluster algebras to $A(Q_{c,1})$ which finishes the proof of this case and the case of if $Q=Q_{d}$ as well.   
               If $t=2$, then in such case the mutation class $[Q]$ is so small and  the proof is very similar to the case of $t=1$. Which means $Q' \in [Q]$ and hence $\mathcal{A}(Q)$ and $\mathcal{A}(Q')$ are isomorphic. 
           \item If $Q$ contains two edges of weight 4. Then $Q'$ must be of the same category,  i.e., has also two edges of weight 4. So, the both of $Q$ and $Q'$ must be symmetric to either $(3.2)$ or $(3.3)$. The case of $Q$ is the quiver in $(3.2)$.  We have $\mathcal{M}$  has the global loops $\mu_{v}, \mu_{j}, \mu_{k}, \mu_{j'}$ and $\mu_{k'}$ which means $\mathcal{M}'$ must have the same global loops. But using Table 1.4 in [16], the only quiver with these global loops is $Q$. Therefore, $Q'\in [Q]$ which finishes the proof of this case. Finally, let $Q$ be symmetric to the quiver in (3.3). Recall that,  the group relations of $\mathcal{M}(Q)$ and $\mathcal{M}(Q')$ are the same, then both of $Q$ and $Q'$ must contain same exact simply-laced subquivers  and same weight two edges. Since $Q'$ has the same rank as $Q$, then the only possible option for $Q'$ is to be identical or symmetric to $Q$, which finishes the proof. 
          
         \end{enumerate}

    \end{enumerate}

\end{enumerate}

\end{proof}
The following is a corollary of the proof of Theorem 3.15.
 \begin{cor} Let $Q$ and $Q'$ be two quivers of finite mutation type. Then,  there is a map $\phi: [Q]\longrightarrow [Q']$ which preserves the rooted/global relations if and only if $Q$ and $Q'$ are symmetric quivers. 
\end{cor}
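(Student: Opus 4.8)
The plan is to recognize that Corollary~3.16 is essentially a restatement of the equivalence proved inside Theorem~3.15, and to extract the relevant implications from that proof rather than reprove them from scratch. The statement relates a structural map $\phi\colon [Q]\longrightarrow[Q']$ preserving rooted/global relations to the symmetry $Q'=\sigma(Q)$, so I would treat the two directions separately and lean on the reduction process and the neighborhood argument developed in Steps~1--3 of Case~1 of Theorem~3.15.

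For the direction ``$\Leftarrow$'', I would start from the hypothesis that $Q$ and $Q'$ are symmetric, so that $Q'=\sigma(Q)$ for some permutation $\sigma\in\mathcal{S}_{n}$. The natural candidate map is the one induced by $\sigma$ on the mutation class, namely $\mu(Q)\mapsto\sigma(\mu_{\sigma^{-1}}(Q))$ in the spirit of Remark~3.14; since $\sigma$ permutes vertices and carries valuations along edges, it sends each single mutation $\mu_{i}$ to $\mu_{\sigma(i)}$ and hence transports any rooted mutation loop of $(X,Q)$ to a rooted mutation loop of $(X',Q')$ and any global mutation loop to a global mutation loop. This is exactly the content already verified in the ``$\Rightarrow$'' half of Theorem~3.15, where $\psi\colon\mu_{i}\mapsto\mu_{\sigma(i)}$ was shown to preserve the equivalence relation $\equiv$ and the cluster orders via Lemma~3.12 of~[14]. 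Thus $\phi$ defined by $\sigma$ preserves the rooted/global relations, giving the easy direction.

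For the direction ``$\Rightarrow$'', I would assume a map $\phi\colon[Q]\longrightarrow[Q']$ preserving rooted/global relations and argue that it forces $Q$ and $Q'$ to be symmetric. The key observation is that preservation of relations is precisely the hypothesis that drives Steps~1--3 of Case~1 in the proof of Theorem~3.15: relation preservation forces each single mutation to map to a single mutation (or a product of mutually non-adjacent single mutations, ruled out by $\mu_{i}^{2}=1_{c}=\emptyset$ from Lemma~3.6(1)), then forces the neighborhood $Nhb._{i}$ of each vertex to map bijectively onto $\sigma(Nhb._{i})$ via the collapse of $\mu_{[ij]}=\mu_{[ji]}$ when $i,j$ are adjacent, and finally forces the orientations to agree because a relation of the form $\mu_{i}\mu_{k}\mu_{j}$ preserving $Nhb._{i}$ cannot survive an orientation reversal of a corresponding edge. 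Assembling these three steps yields a vertex permutation $\sigma$ with $Q'=\sigma(Q)$, i.e. symmetry.

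The main obstacle, and the step I would treat most carefully, is that the neighborhood and orientation arguments of Theorem~3.15 were carried out under the standing assumption that $[Q]$ contains a simply-laced quiver (Case~1). Here $Q$ is only assumed of finite mutation type, so I must explain why the argument still applies or how the higher-weight cases (Cases~2--4) feed into the corollary. The cleanest route is to note that the corollary is phrased for general finite mutation type quivers and that the relation-preserving map is strictly stronger than the numerical data used in Cases~2--4; I would argue that in every weight stratum the preservation of the full set of rooted and global relations already pins down the underlying valued graph up to the permutation $\sigma$, invoking the case analysis of Theorem~3.15 to cover weights $2$, $3$, and $4$ and Lemma~3.13 to locate the weight-bearing edges. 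This reduction of the corollary to the internal steps of Theorem~3.15 is what justifies calling it ``a corollary of the proof'' rather than a corollary of the statement.
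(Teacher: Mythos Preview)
Your proposal is correct and matches the paper's own treatment: the paper offers no separate argument for Corollary~3.16, merely labeling it ``a corollary of the proof of Theorem~3.15'', and your extraction of the two directions from the $\Rightarrow$ half (for $\Leftarrow$) and from Steps~1--3 of Case~1 together with the weight-stratified Cases~2--4 (for $\Rightarrow$) is exactly what is intended. One minor remark: in Step~1 the paper does \emph{not} rule out $\psi(\mu_{i})$ being a product of mutually non-adjacent single mutations---it explicitly allows that possibility---so your parenthetical ``ruled out by $\mu_{i}^{2}=1$'' is slightly off, though inessential for the neighborhood argument that follows.
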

\begin{que} Do Theorem 3.15 and/or Corollary 3.16 apply to infinite type cluster algebras? .
\end{que}

\subsection*{Acknowledgments}I would like to thank the anonymous reviewer for their thorough review and valuable suggestions, which greatly contributed to the final form of this paper. I also extend my gratitude to Fang Li and Zongzhu Lin for their insightful discussions on the topic.
% -----------------------------------------------------------------------------

\end{document}